\documentclass[12pt]{amsart}
\usepackage[all]{xy}
\usepackage{a4wide}
\usepackage{amssymb}
\usepackage{caption}
\usepackage{amsthm}
\usepackage{amsmath}
\usepackage{amscd,enumitem}
\usepackage{verbatim}
\usepackage{float}
\usepackage{color}
\usepackage{wasysym}
\usepackage[all]{xy}
\usepackage{hyperref}
\usepackage{cleveref}
\textheight8.8in \textwidth6.7in 

\numberwithin{equation}{section}

\theoremstyle{plain}
\newtheorem{theorem}{Theorem}
\numberwithin{theorem}{section}
\newtheorem{corollary}[theorem]{Corollary}

\newtheorem{lemma}[theorem]{Lemma}
\newtheorem{proposition}[theorem]{Proposition}

\theoremstyle{definition}

\newtheorem*{theorem*}{Theorem}

\theoremstyle{remark}
\newtheorem*{remark}{Remark}




\newcommand{\Pod}{\mathrm{pod}}


\begin{document}


\title{Composition-theoretic series in partition theory}

\author{Robert Schneider and Andrew V. Sills}

\address{Department of Mathematical Sciences\newline
Michigan Technological University\newline
Houghton, Michigan 49931, U.S.A.}
\email{robertsc@mtu.edu}

\address{Department of Mathematical Sciences\newline
Georgia Southern University\newline
Statesboro, Georgia 30458, U.S.A.}
\email{asills@georgiasouthern.edu}


\begin{abstract} We use sums over  {integer compositions}  analogous to generating functions in  {partition theory}, to express certain partition enumeration functions as sums over compositions into parts that are $k$-gonal numbers; our proofs employ Ramanujan's theta functions. We explore applications to lacunary $q$-series, and to a new class of composition-theoretic Dirichlet series.  
\end{abstract}

\maketitle

\section{Introduction and statement of results}\label{section1}

\subsection{Partitions vs. compositions}
In this paper we use sums over  {integer compositions}  analogous to generating functions in  {partition theory}, to express certain partition enumeration functions as sums over compositions into parts that are $k$-gonal numbers, and other applications.

Let $\mathbb N$ denote the {\it natural numbers} (positive integers). Let $\mathcal P$ denote the set of {\it integer partitions}, unordered finite sums of natural numbers including the empty partition $\emptyset\in \mathcal P$ (see e.g. \cite{And}). For nonempty $\lambda\in \mathcal P$, we notate $\lambda=(\lambda_1, \lambda_2, \dots, \lambda_r)$,  $\lambda_1\geq \lambda_2\geq \dots \geq \lambda_r \geq 1$. Let $S\subseteq \mathbb N$, and let $\mathcal P_S$ denote the set of partitions whose parts lie in $S$; we consider $\emptyset \in \mathcal P_S$ for all $S\subseteq \mathbb N$. 
For $\lambda \in \mathcal P$, let $|\lambda|\geq 0$ denote the {\it size} (sum of parts), let $\ell(\lambda)\geq 0$ denote the {\it length} (number of parts), 
and let $m_i=m_i(\lambda)\geq 0$ be the {\it multiplicity} (frequency) of $i\in \mathbb N$ as a part of partition $\lambda$.

An important method in partition theory is the use of product-sum generating functions. For example, if $S\subseteq \mathbb N$, taking $z, q \in \mathbb C$ such that both $|q|<1$ and $|q^{\operatorname{min}S}|<  |z^{-1}|$, with $\operatorname{min}S$ denoting the least element of  subset $S$, 
then standard  generating function arguments  \cite{And, Fine} give 
\begin{equation}\label{partgen}
\prod_{n\in S}(1-z q^n)^{-1}\  =\  \sum_{\lambda\in \mathcal P_S}z^{\ell(\lambda)}q^{|\lambda|},
\end{equation}
where the right-hand sum is taken over partitions whose parts all lie  in $S$. 
Here we look at analogous generating functions for {\it ordered} sums of natural numbers.

Let $\mathcal C$ denote the set of {\it integer compositions}, which are {ordered} finite sums of natural numbers. We extend the partition-theoretic terminology and notations defined above to compositions, with the same meanings, e.g. for $c\in\mathcal C$ we let $|c|$ denote the sum of the parts, $\ell(c)$ denote the number of parts, etc.  
Let $\mathcal C_S$ denote compositions whose parts all lie in $S\subseteq \mathbb N$, thus $\mathcal C = \mathcal C_{\mathbb N}$; take the empty composition $\emptyset\in \mathcal C_S$ for all $S$. 
For $z, q \in \mathbb C$,  we define an auxiliary series
\begin{equation}\label{fdef} \phi_S(z; q):=1-z \sum_{n\in S}q^n,\end{equation} 
which converges when $|q|<1$ by comparison with geometric series;  note $\phi_S(0;q)=1$ identically. 
We view $\phi_S(z;q)$ as a composition-theoretic  analogue of the generating function $\prod_{n\in S}(1-zq^n)$; its reciprocal gives the following identity, an analogue of  \eqref{partgen} for compositions in $\mathcal C_S$. 

\begin{proposition}\label{prop1}  
For $S\subseteq \mathbb N$, $z,q\in \mathbb C$ such that $|q|<\frac{1}{1+|z|}$,  
we have \begin{equation*}
\frac{1}{\phi_S(z; q)}\  =\  \sum_{c\in\mathcal C_S} z^{\ell(c)}q^{|c|},
\end{equation*}
where the right-hand sum is taken over the set of compositions whose parts all lie in $S$.
\end{proposition}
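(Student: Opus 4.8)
The plan is to recognize $1/\phi_S(z;q)$ as a geometric series and then expand it combinatorially. Writing $f := z\sum_{n\in S}q^n$, so that $\phi_S(z;q) = 1-f$ by \eqref{fdef}, the first step is to verify that the hypothesis $|q| < \frac{1}{1+|z|}$ forces $|f| < 1$, which will license the geometric expansion $1/(1-f) = \sum_{k\geq 0} f^k$. Indeed, by the triangle inequality and comparison with a geometric series,
\begin{equation*}
|f| \ \leq\ |z|\sum_{n\in S}|q|^n \ \leq\ |z|\sum_{n\geq 1}|q|^n \ =\ \frac{|z|\,|q|}{1-|q|},
\end{equation*}
and the stated bound on $|q|$ is precisely equivalent to $\frac{|z|\,|q|}{1-|q|} < 1$ (since $|q|(1+|z|)<1$ rearranges to this, using $1-|q|>0$).

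Next I would expand the $k$-th power by distributing the product over all ordered $k$-tuples of elements of $S$:
\begin{equation*}
f^k \ =\ z^k\Bigl(\sum_{n\in S}q^n\Bigr)^{\!k} \ =\ z^k\!\!\sum_{(n_1,\dots,n_k)\in S^k}\!\! q^{\,n_1+\cdots+n_k}.
\end{equation*}
The key observation is that an ordered $k$-tuple $(n_1,\dots,n_k)$ with each $n_i\in S$ is exactly a composition $c\in\mathcal C_S$ of length $\ell(c)=k$, with $|c| = n_1+\cdots+n_k$; the term $k=0$ corresponds to the empty composition and contributes $1$. Hence $f^k = \sum_{c\in\mathcal C_S,\,\ell(c)=k} z^{\ell(c)}q^{|c|}$, and summing over $k\geq 0$ and regrouping by composition yields the asserted identity.

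The one point requiring care---and the main obstacle---is the rearrangement in this last step: when $S$ is infinite each $f^k$ is itself an infinite sum, so interchanging the order of summation (grouping terms by the composition $c$ rather than by its length $k$) must be justified by absolute convergence. This is where the hypothesis does its real work. The same estimate as above gives
\begin{equation*}
\sum_{c\in\mathcal C_S}|z|^{\ell(c)}|q|^{|c|} \ =\ \sum_{k\geq 0}\Bigl(|z|\sum_{n\in S}|q|^n\Bigr)^{\!k} \ \leq\ \sum_{k\geq 0}\Bigl(\frac{|z|\,|q|}{1-|q|}\Bigr)^{\!k} \ <\ \infty
\end{equation*}
under the assumption $|q| < \frac{1}{1+|z|}$. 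With absolute (hence unconditional) convergence of the double series in hand, all the rearrangements above are legitimate, and the combinatorial identification of tuples in $S^k$ with compositions in $\mathcal C_S$ of length $k$ completes the proof.
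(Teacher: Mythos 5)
Your proof is correct and follows essentially the same route as the paper: the paper deduces the identity from Lemma \ref{lemma1} (whose proof is itself the geometric-series expansion you carry out directly) and verifies convergence with the identical triangle-inequality estimate $|\sum_{n\in S}q^n|\leq\sum_{n\geq 1}|q|^n<|z^{-1}|$. Your version is merely more self-contained, inlining the expansion of $\left(z\sum_{n\in S}q^n\right)^k$ over ordered $k$-tuples and the absolute-convergence justification for the rearrangement, rather than citing the lemma.
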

%
%
%

\begin{remark}
The restriction on $|q|$ in Proposition \ref{prop1} is sufficient for convergence of the series on the right-hand side for all $z$ and $S$. 
For fixed $z, S$, the necessary condition is $|\sum_{n\in S}q^n|<  |z^{-1}|$.
\end{remark}

We postpone the proof of the above proposition, and all other proofs, for Section \ref{section2}. 


One might anticipate such analogies between partitions and compositions, since both represent sums of natural numbers.  
On the other hand, the unordered versus ordered arrangement of parts is a critical enumerative 
distinction (see the twelve-fold way in \cite[p. 71 ff.]{Stanley}).  It is well known that there are 
$2^{n-1}$  compositions of size $n$ \cite[p. 151]{compositions}, while the number $p(n)$ of 
partitions of size $n$ is  much smaller, only $O\left(e^{\pi\sqrt{2n/3}}/n\right)$ by the 
Hardy--Ramanujan asymptotic~\cite[p. 79]{HR}. 
The multinomial theorem \cite{multinomial} gives an 
explicit connection: for each partition $\lambda\in \mathcal P$, there are 
\begin{equation}\label{multi} \frac{\ell(\lambda)!}{m_1(\lambda)!\  m_2(\lambda) !\  m_3(\lambda)! \cdots}\  \geq\  1 \end{equation}
multiset permutations of the parts of $\lambda$, i.e., distinct compositions having  the same parts as $\lambda$. 

\subsection{Sums over partitions vs. sums over compositions}
Summations $\sum_{\lambda \in \mathcal P'}$ indexed by partitions $\lambda\in \mathcal P'\subseteq \mathcal P$, such as on the right-hand side of \eqref{partgen}, date back 
at least to work of MacMahon~\cite[p. 61ff.]{MacMahon} and Fine \cite[\S22]{Fine}. 
Are there natural examples of {\it composition}-theoretic series? 
In fact, many partition-theoretic series can also be expressed in terms of compositions. 

\begin{proposition}\label{symm}
If $g \colon \mathcal P \to \mathbb C$ is a function symmetric on the parts of $\lambda\in \mathcal P$ (i.e., unaffected by their order) and $\sum_{\lambda\in \mathcal P}g(\lambda)$ converges absolutely,  then 
\begin{equation*}
\sum_{\substack{\lambda\in\mathcal{P}}} g(\lambda)\  =\  \sum_{\substack{c \in\mathcal{C}}} \widehat{g}(c),
\end{equation*}
where $\widehat{g}(c):=g(\lambda) \cdot \frac{m_1(\lambda)! m_2(\lambda)! \cdots m_n(\lambda)!}{\ell(\lambda)!}$ for each $c\in \mathcal C$ having the same multiset of parts as $\lambda\in \mathcal P$. 
\end{proposition}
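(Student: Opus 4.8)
The plan is to establish a bijection-with-weights between the partition-indexed sum and the composition-indexed sum by grouping the compositions according to their underlying partition. Every composition $c \in \mathcal C$ determines a unique partition $\lambda \in \mathcal P$, namely the one obtained by sorting the parts of $c$ into nonincreasing order; conversely, the compositions mapping to a given $\lambda$ are exactly the distinct multiset permutations of the parts of $\lambda$. This partitions $\mathcal C$ into fibers indexed by $\mathcal P$, which suggests rewriting $\sum_{c \in \mathcal C} \widehat g(c)$ as an iterated sum, first over $\lambda \in \mathcal P$ and then over the compositions $c$ lying above $\lambda$.

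First I would make this grouping precise: for each fixed $\lambda \in \mathcal P$, let $\mathcal C_\lambda \subseteq \mathcal C$ denote the set of compositions whose multiset of parts equals that of $\lambda$, so that $\mathcal C = \bigsqcup_{\lambda \in \mathcal P} \mathcal C_\lambda$ is a disjoint union. Then I would write
\begin{equation*}
\sum_{c \in \mathcal C} \widehat g(c) \  =\  \sum_{\lambda \in \mathcal P} \sum_{c \in \mathcal C_\lambda} \widehat g(c).
\end{equation*}
The rearrangement of the (singly-indexed) sum into this doubly-indexed form is justified by the absolute convergence hypothesis on $\sum_{\lambda} g(\lambda)$, together with the observation that $\sum_c |\widehat g(c)|$ equals $\sum_\lambda |g(\lambda)|$ by the same counting argument carried out below; this lets me invoke unconditional rearrangement of absolutely convergent series.

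Next I would evaluate the inner sum. Since $\widehat g(c)$ depends only on the underlying partition $\lambda$ (because $g$ is symmetric on the parts, so $g(\lambda)$ is well-defined from the multiset alone, and the correcting factor involves only the multiplicities $m_i(\lambda)$ and length $\ell(\lambda)$), every term in the inner sum equals the common value $\widehat g(c) = g(\lambda) \cdot \frac{m_1(\lambda)!\, m_2(\lambda)! \cdots}{\ell(\lambda)!}$. The number of terms is $|\mathcal C_\lambda|$, which by the multinomial count \eqref{multi} equals $\frac{\ell(\lambda)!}{m_1(\lambda)!\, m_2(\lambda)! \cdots}$. Multiplying, the two combinatorial factors cancel exactly, leaving $\sum_{c \in \mathcal C_\lambda} \widehat g(c) = g(\lambda)$.

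Substituting back yields $\sum_{c \in \mathcal C} \widehat g(c) = \sum_{\lambda \in \mathcal P} g(\lambda)$, as desired. The only genuine subtlety, and the step I would be most careful about, is the convergence bookkeeping: I must confirm that the absolute convergence of $\sum_\lambda g(\lambda)$ really does license the interchange, i.e. that $\sum_{c \in \mathcal C} |\widehat g(c)| < \infty$. This follows because the same fiberwise cancellation applied to $|g(\lambda)|$ shows $\sum_{c} |\widehat g(c)| = \sum_\lambda |g(\lambda)| < \infty$, so the composition-indexed series is absolutely convergent and its terms may be summed in any order, in particular by grouping into the fibers $\mathcal C_\lambda$. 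Everything else is the routine multinomial cancellation already recorded in \eqref{multi}.
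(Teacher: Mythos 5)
Your proof is correct and follows essentially the same route as the paper's: group the compositions into fibers over their underlying partitions and cancel the multinomial factor from \eqref{multi}. The paper states this in one sentence; your added care with the absolute-convergence bookkeeping to justify the regrouping is a welcome but not substantively different elaboration.
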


However, the equal sums in Proposition \ref{symm} are not necessarily equally ``natural''. In either the partition or composition setting, the summation may have a simple combinatorial or generating function interpretation, whereas in the alternative setting, the coefficients %
require additional computation of lengths and multiplicities, or arguments about multiset permutations.\footnote{We note a more ``natural'' formula with fewer parameters is not necessarily more convenient for applications.}  
Connections between partition sums and modular forms  are ubiquitous in the  literature. For instance, the Dedekind eta function $q^{1/24}\prod_{n\geq 1}(1-q^n),   q:=e^{2\pi\operatorname{i}\tau}, \tau \in \mathbb H,$ 
is the prototype of a weight 1/2 modular form in $\tau$. 
Its reciprocal is essentially Euler's generating function formula for the partition function 
$p(n)$ \cite[p. 3 ff.]{And}, which enumerates the partitions of $n\geq0,\  p(0):=1$:   \begin{equation*}
\frac{1}{(q;q)_{\infty}}\  =\   \sum_{\lambda \in \mathcal P}q^{|\lambda|}\  =\  \sum_{n\geq 0}p(n)q^n,
\end{equation*}  
where $(z;q)_{\infty}:=\prod_{n=0}^{\infty}(1-zq^n), z\in \mathbb C, |q|<1,$ denotes the $q$-{\it Pochhammer symbol}. 
If $z\neq 0$, the {\it Jacobi triple product formula} \cite{Jacobi} (see also~\cite[p. 21, Theorem 2.8]{And}), 
\begin{equation}\label{jtp}
(q;q)_{\infty}\cdot (-z^{-1};q)_{\infty}\cdot  (-zq;q)_{\infty}\ =\  \sum_{n=-\infty}^{\infty}z^n q^{\frac{n(n+1)}{2}},
\end{equation}
gives rise to numerous modular and partition-theoretic identities \cite{Ono_web}. More recently, deep connections have been drawn between sums over partitions and modular forms in connection with the $q$-bracket of Bloch and Okounkov, an expected value operator from statistical physics that induces modularity from certain partition-theoretic $q$-series  (see e.g. \cite{BO, BOW, padic, Robert_jtp, JWvI, Zagier}). 


%

Are there modular forms that arise  naturally as {\it composition}-theoretic $q$-series? We can answer this question in the affirmative by giving examples. Recall the following cases of \eqref{jtp} studied by Ramanujan, examples of what are now called {\it Ramanujan's theta functions} \cite{Berndt}:
\begin{flalign}\label{psi}  \psi(q) &:= \sum_{n\geq 0} q^{n(n+1)/2}\  =\  \prod_{k=1}^{\infty}(1-q^{k})(1+q^k)^{2},\\ \label{phi}
\varphi(q)\  &:=\  \sum_{n=-\infty}^{\infty}q^{n^2}\  =\  \prod_{k=1}^{\infty}(1-q^{2k})(1+q^{2k-1})^2
 .\end{flalign}
Up to multiplicative factors in $q=e^{2\pi\operatorname{i}\tau},$ these are modular forms of weight $1/2$ on $\text{SL}_2(\mathbb Z)$. 
Inspecting the summations in the above equations, it is noteworthy that the exponents of $q$ are {\it polygonal numbers}, that is, $n(n+1)/2$ is the $n$th {triangular} number, and $n^2$ is the $n$th perfect square, 
with $n\geq 1$. 
For $k\geq 3$, let  \begin{equation*} P_k := \left\{ 
 n\big( (k-2)n \pm (k-4)  \big)/2 : n\in\mathbb{N} \right\}
\end{equation*} denote the set of
 \emph{positive extended $k$-sided polygonal numbers}, which we will simply refer to
as the \emph{$k$-gonal numbers}. Thus $P_3$ is the set of triangular numbers, $P_4$ is the squares, and so on. 

Then one can write the two theta functions above in the form $\phi_S(z; q)$ as defined in \eqref{fdef}: 
\begin{equation*}
\psi(q) =\  1+\sum_{n\in P_3}q^n\  =\  \phi_{P_3}(-1,q),  \  \  \  \  \  \  \  \  \varphi(q)  =\  1+2\sum_{n\in P_4}q^n\  =\  \phi_{P_4}(-2,q).
\end{equation*} 
By Proposition \ref{prop1}, the reciprocals of these functions have natural representations as $q$-series summed over compositions whose parts are $k$-gonal numbers:
\begin{equation}\label{theta}
\frac{1}{\psi(q)}\  =\  \sum_{c\in\mathcal C_{P_3}} (-1)^{\ell(c)}q^{|c|},\  \  \  \  \  \  \  \   \frac{1}{\varphi(q)}\  =\  \sum_{c\in\mathcal C_{P_4}} (-2)^{\ell(c)}q^{|c|},
\end{equation}
with $|q|<1/2$ in the first case, and $|q|<1/3$ in the second. Multiplying these functions by appropriate rational powers of $q=e^{2\pi\operatorname{i}\tau},$ yields natural examples of composition-theoretic modular forms of weight $-1/2$. While it is beyond the scope of this study to pursue the question, 
it is natural to wonder whether the composition setting might provide insight into modular forms theory, as do partitions in e.g. \cite{BO, JS, Robert_jtp, JWvI, Zagier}.  

\subsection{Statement of main results}\label{main}
 Here we enumerate certain classes of restricted partitions of
size $n$, as sums over \emph{compositions} whose parts are $k$-gonal numbers. We prove these identities in Section \ref{section2} below, utilizing reciprocals of Ramanujan's theta functions that can be interpreted both as partition generating functions, and as composition-theoretic series similar to \eqref{theta}. 
 

Let $p_S(n) := \#\{ \lambda\in\mathcal{P}_S: |\lambda| = n \}.$ 
 For  $k\geq 5$, let $S_k:= \{ n \in\mathbb{N} : n\equiv 0, \pm 1 \pmod{k - 2} \}$.
\begin{theorem} \label{eventhm}
 For   $k\geq 6$ an even integer, the number  of  partitions of size $n$ whose parts are in $S_k$ is given by 
  \[ p_{S_k}(n) = (-1)^n 
 \sum_{\substack{c\in\mathcal{C}_{P_k} \\ |c| = n}   } (-1)^{\ell(c)}, \]
where the right-hand sum is taken over the set of size-$n$ compositions into $k$-gonal parts. \end{theorem}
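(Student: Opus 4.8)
The statement is an identity of integers that does not involve $q$, so the plan is to prove the corresponding identity of formal power series in $q$ and then compare coefficients of $q^n$. On the partition side, \eqref{partgen} with $z=1$ gives $\sum_{n\ge 0}p_{S_k}(n)q^n=\prod_{m\in S_k}(1-q^m)^{-1}$. On the composition side, writing $a(n):=\sum_{c\in\mathcal C_{P_k},\,|c|=n}(-1)^{\ell(c)}$, Proposition \ref{prop1} with $z=-1$ identifies $\sum_{n\ge 0}a(n)q^n$ with $1/\phi_{P_k}(-1;q)=1/\bigl(1+\sum_{m\in P_k}q^m\bigr)$. The global factor $(-1)^n$ in the theorem signals that the two sides should be matched after the substitution $q\mapsto -q$, so the goal reduces to the single product-to-sum identity $\prod_{m\in S_k}(1-q^m)=1+\sum_{m\in P_k}(-q)^m$.

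To obtain that identity I would recognize the left-hand product as a Ramanujan theta function. Substituting $q\mapsto q^{k-2}$ and $z\mapsto -q^{-1}$ in the Jacobi triple product \eqref{jtp}, the three $q$-Pochhammer factors become $(q^{k-2};q^{k-2})_\infty\,(q;q^{k-2})_\infty\,(q^{k-3};q^{k-2})_\infty$, whose zeros run exactly over the residues $0,1,-1\pmod{k-2}$, that is, over $S_k$; hence this product equals $\prod_{m\in S_k}(1-q^m)$. The sum side of \eqref{jtp} becomes $\sum_{n\in\mathbb{Z}}(-1)^n q^{g(n)}$ with $g(n)=n\bigl((k-2)n+(k-4)\bigr)/2$. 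Reindexing, $n=0$ contributes $1$, while $n\ge 1$ and $n\le -1$ produce respectively the $+$ and $-$ branches of the $k$-gonal numbers, so $\{g(n):n\in\mathbb{Z}\setminus\{0\}\}$ enumerates $P_k$.

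The evenness of $k$ enters precisely at the next step. Writing $k-2=2a$ gives $g(n)=an^2+(a-1)n\equiv n\pmod 2$, whence $(-1)^n=(-1)^{g(n)}$ and each term of the theta sum equals $(-q)^{g(n)}$. This collapses the signed theta series into $1+\sum_{m\in P_k}(-q)^m$, establishing the desired product-to-sum identity. Taking reciprocals and applying Proposition \ref{prop1} once more---now with base $-q$ and $z=-1$, valid for $|q|<1/2$---yields $\sum_n p_{S_k}(n)q^n=\sum_{c\in\mathcal C_{P_k}}(-1)^{\ell(c)}(-q)^{|c|}$, and reading off the coefficient of $q^n$ gives $p_{S_k}(n)=(-1)^n\sum_{|c|=n}(-1)^{\ell(c)}$.

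I expect the main obstacle to be the sign and index bookkeeping around the theta function: one must choose the substitution in \eqref{jtp} so that the product selects exactly the residues defining $S_k$, and then track the sign carefully through the reindexing over $P_k$. The parity identity $g(n)\equiv n\pmod 2$ is the crux, and it is special to even $k$ (already for $k=5$ one has $g(1)=2$, so $(-1)^1\neq(-1)^{g(1)}$), which is exactly why the hypothesis that $k$ is even is required and why an odd $k$ would demand separate treatment. Minor points to verify are that the generalized $k$-gonal numbers $g(n)$ are distinct (so $P_k$ is enumerated without repetition) and that the manipulations respect the convergence constraint $|q|<1/2$ of Proposition \ref{prop1}, although the final identity is purely coefficient-wise.
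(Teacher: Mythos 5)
Your proposal is correct and follows essentially the same route as the paper: identify the theta series $1+\sum_{m\in P_k}q^m$ via the Jacobi triple product, invert it with Proposition \ref{prop1} (the paper uses the equivalent Lemma \ref{lemma1}) to get the composition sum, recognize the reciprocal of the signed product as the generating function for $p_{S_k}(n)$, and connect the two by $q\mapsto -q$, with the evenness of $k$ entering through the same parity fact (you phrase it as $g(n)\equiv n\pmod 2$; the paper phrases it as $f(-q,-q^{k-3})\mapsto f(q,q^{k-3})$ under $q\mapsto -q$). The only cosmetic difference is that you derive the product form $\prod_{m\in S_k}(1-q^m)$ directly from \eqref{jtp} rather than citing the standard identity for $(q,q^{k-3},q^{k-2};q^{k-2})_\infty$.
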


For odd values of $k\geq 5$, a similar theorem exists, but it involves a {modified length statistic}. 
Define the subset $P^*_k\subset P_k$ as follows:
\[ P^*_k := \left\{  n\big( (k-2)n - (k-4) \big)/2 :
 n\in\mathbb{Z}, n\neq 0, \mbox{ and } n\equiv 0,1 \hspace{-3mm} \pmod{k-2} \right\}. \]
Let $\ell_{k}^*(c)$ denote the number of parts in composition $c$ that
are in $P^*_k$.
\begin{theorem} \label{oddthm}
  For $k\geq 5$ an odd integer, the number   of  partitions of size $n$ whose parts are in $S_k$ is given by 
   \[ p_{S_k}(n) = (-1)^n
    \sum_{\substack{c\in\mathcal{C}_{P_k} \\ |c| = n}   } (-1)^{\ell_{k}^*(c)}, \]
where the right-hand sum is taken over the set of size-$n$ compositions into $k$-gonal parts. \end{theorem}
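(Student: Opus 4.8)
The plan is to recognize both sides of the asserted identity as the two expansions---over partitions and over compositions---of the reciprocal of a single Ramanujan theta function. First I would introduce
\[
\Theta_k(q) \;:=\; \sum_{n=-\infty}^{\infty} (-1)^n\, q^{\,n((k-2)n-(k-4))/2},
\]
and note that, as $n$ ranges over $\mathbb{Z}\setminus\{0\}$, the exponents run over $P_k$ with each positive $k$-gonal number occurring once (positive $n$ producing the ``$-$'' branch, negative $n$ the ``$+$'' branch). Applying the Jacobi triple product \eqref{jtp} with $q$ replaced by $q^{k-2}$ and with $z=-q^{-(k-3)}$ collapses this sum to the product $(q;q^{k-2})_\infty\,(q^{k-3};q^{k-2})_\infty\,(q^{k-2};q^{k-2})_\infty$, whose three factors run over the residues $1,\ k-3\equiv -1,$ and $0$ modulo $k-2$. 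Since $S_k=\{n:n\equiv 0,\pm1 \pmod{k-2}\}$, this product is exactly $\prod_{n\in S_k}(1-q^n)$, so
\[
\frac{1}{\Theta_k(q)} \;=\; \prod_{n\in S_k}\frac{1}{1-q^n} \;=\; \sum_{n\ge 0} p_{S_k}(n)\,q^n .
\]

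Next I would substitute $q\mapsto -q$, giving $1/\Theta_k(-q)=\sum_{n\ge0}(-1)^n p_{S_k}(n)q^n$, so that the left-hand side of the theorem is the coefficient of $q^n$ in $1/\Theta_k(-q)$. Writing $\Theta_k(-q)=1-\sum_{m\in P_k}\varepsilon_m q^m$ with signs $\varepsilon_m\in\{\pm1\}$, a per-part version of Proposition \ref{prop1}---in which the common factor $z$ is replaced by the individual coefficient $\varepsilon_m$ attached to each part $m\in P_k$, proved by the same geometric-series expansion---yields, as formal power series (equivalently for $|q|$ small),
\[
\frac{1}{\Theta_k(-q)} \;=\; \sum_{c\in\mathcal{C}_{P_k}} \Bigl(\prod_{i=1}^{\ell(c)}\varepsilon_{c_i}\Bigr)\, q^{|c|}.
\]
Comparing coefficients of $q^n$ then reduces the theorem to the purely sign-theoretic identity $\prod_{i}\varepsilon_{c_i}=(-1)^{\ell_k^*(c)}$ for every composition $c$ into $k$-gonal parts.

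The crux---and the step I expect to be the main obstacle---is the sign computation. Using the rewriting $e_n:=n((k-2)n-(k-4))/2 = (k-2)\cdot n(n-1)/2 + n$, the coefficient of $q^{e_n}$ in $\Theta_k(-q)$ is $(-1)^{n+e_n}=(-1)^{(k-2)\,n(n-1)/2}$. For \emph{even} $k$ this exponent is always even, so every $\varepsilon_m=-1$, the product collapses to $(-1)^{\ell(c)}$, and one recovers Theorem \ref{eventhm} directly from Proposition \ref{prop1} with $z=-1$. For \emph{odd} $k$, however, $(-1)^{(k-2)n(n-1)/2}=(-1)^{n(n-1)/2}$ genuinely alternates, equalling $+1$ precisely when $n\equiv 0,1\pmod 4$ and $-1$ otherwise. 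The remaining work is the bookkeeping that identifies this ``sign-plus'' class with the starred set $P^*_k$: writing $\Theta_k(-q)=1+\sum_{m\in P^*_k}q^m-\sum_{m\in P_k\setminus P^*_k}q^m$, each starred part then carries $\varepsilon_m=-1$ and each unstarred part $\varepsilon_m=+1$, so $\prod_i\varepsilon_{c_i}=(-1)^{\ell_k^*(c)}$ with no residual global sign. Carefully matching the congruence that governs the sign of $e_n$ against the defining congruence of $P^*_k$, and confirming that the overall $(-1)^{\ell(c)}$ is absorbed rather than left over, is the delicate point; once it is settled, comparison of $q^n$-coefficients delivers the stated formula.
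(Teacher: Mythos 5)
Your overall route is the same as the paper's: your $\Theta_k(q)$ is precisely $f(-q,-q^{k-3})$, your triple-product evaluation reproduces \eqref{cs12}, the substitution $q\mapsto -q$ reproduces \eqref{cs2}, and your ``per-part version of Proposition~\ref{prop1}'' is exactly Lemma~\ref{lemma1}. Your sign computation is also correct: since $e_n=(k-2)\tbinom{n}{2}+n$, the coefficient of $q^{e_n}$ in $\Theta_k(-q)=f(q,-q^{k-3})$ is $(-1)^{(k-2)n(n-1)/2}=(-1)^{n(n-1)/2}$ for odd $k$, which is $+1$ precisely when $n\equiv 0,1\pmod{4}$.

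However, the step you defer --- ``matching the congruence that governs the sign of $e_n$ against the defining congruence of $P^*_k$'' --- is not routine bookkeeping; it is a genuine gap, and as posed it cannot be closed. The set $P^*_k$ in the statement is cut out by the condition $n\equiv 0,1\pmod{k-2}$, which for every odd $k\geq 5$ is a \emph{different} set from your sign-positive class $\{e_n: n\equiv 0,1\pmod{4}\}$. Take $k=5$: then $7=e_{-2}$ and $-2\equiv 1\pmod{3}$, so $7\in P^*_5$ by the printed definition, yet the coefficient of $q^{7}$ in $f(q,-q^{2})$ is $q^{1}\cdot(-q^{2})^{3}=-q^{7}$, i.e.\ $-1$ (consistent with $-2\equiv 2\pmod 4$). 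The mismatch is detectable numerically: with the printed $P^*_5$, the inner sum in Corollary~\ref{pofn} at $n=7$ equals $-1+2+3-4-10-6-1=-17$, giving $(-1)^7(-17)=17\neq p(7)=15$, whereas your mod-$4$ class flips the contribution of the composition $(7)$ to $+1$ and yields $15$. So to finish you must take $P^*_k=\{e_n: n\in\mathbb{Z},\, n\neq 0,\, n\equiv 0,1\pmod{4}\}$ --- equivalently, define $P^*_k$ as the set of exponents carrying coefficient $+1$ in $f(q,-q^{k-3})$ --- rather than the congruence printed in the statement. With that reading your argument closes and coincides with the paper's proof, which itself simply asserts the expansion $f(q,-q^{k-3})=1+\sum_{n\in P^*_k}q^n-\sum_{n\in P_k\setminus P^*_k}q^n$ without checking it against the stated definition of $P^*_k$.
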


%
%

The cases $k=3, 4$ are not covered by the above theorems, but do give interesting results. 
%
Let  $\operatorname{pod}(n)$ denote the number of {\it odd-distinct partitions} wherein odd parts may not
be repeated, with size equal to $n$; the function $\operatorname{pod}(n)$ was studied in detail by Hirschhorn~\cite[Chapter 32]{MDH} and 
Hirschhorn--Sellers~\cite{HS}. In Section \ref{section2}, we prove the following identity using Theorem \ref{eventhm}.

\begin{corollary}\label{podthm}
The number of odd-distinct partitions of size $n$   is given by 
 \begin{equation*} \label{podformula} \Pod(n)\  =\  (-1)^n{\sum_{\substack{c\in\mathcal{C}_{P_3}\\|c|=n} }} (-1)^{\ell(c)},
\end{equation*}
where the right-hand sum is taken over the set of size-$n$ compositions into triangular parts.
\end{corollary}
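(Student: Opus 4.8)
The plan is to treat Corollary \ref{podthm} as the $k=3$ instance of the mechanism underlying Theorem \ref{eventhm}, the only new feature being that the product form of $\psi$ produces odd-distinct partitions rather than partitions restricted to a residue class. I would start from the composition expansion \eqref{theta}, namely $\frac{1}{\psi(q)} = \sum_{c\in\mathcal C_{P_3}}(-1)^{\ell(c)}q^{|c|}$, valid for $|q|<1/2$, which is itself Proposition \ref{prop1} applied to $\psi(q)=\phi_{P_3}(-1;q)$. Substituting $q\mapsto -q$ (which keeps $|q|<1/2$) and using $(-q)^{|c|}=(-1)^{|c|}q^{|c|}$, the coefficient of $q^n$ on the right becomes $(-1)^n\sum_{c\in\mathcal C_{P_3},\,|c|=n}(-1)^{\ell(c)}$, which is exactly the right-hand side of the corollary. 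Everything therefore reduces to identifying the left-hand side $\frac{1}{\psi(-q)}$ as the generating function $\sum_{n\geq 0}\Pod(n)q^n$.

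For that identification I would pass to the product side. Beginning from \eqref{psi} and using $1+q^k=(1-q^{2k})(1-q^k)^{-1}$, together with the split $(q;q)_\infty=(q;q^2)_\infty(q^2;q^2)_\infty$ into odd- and even-index factors, the triple product simplifies to $\psi(q)=(q^2;q^2)_\infty/(q;q^2)_\infty$. Replacing $q$ by $-q$ leaves $(q^2;q^2)_\infty$ fixed (even exponents) and sends $(q;q^2)_\infty$ to $(-q;q^2)_\infty$ (odd exponents), so that $\frac{1}{\psi(-q)}=(-q;q^2)_\infty/(q^2;q^2)_\infty=\prod_{k\geq 1}(1+q^{2k-1})(1-q^{2k})^{-1}$. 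The factor $\prod_{k\geq 1}(1+q^{2k-1})$ is the generating function for distinct odd parts and $\prod_{k\geq 1}(1-q^{2k})^{-1}$ for unrestricted even parts, whose product enumerates odd-distinct partitions; hence $\frac{1}{\psi(-q)}=\sum_{n\geq 0}\Pod(n)q^n$. Equating coefficients of $q^n$ from the two paragraphs then closes the argument.

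The step I expect to be delicate is precisely this product-form identification, and relatedly the explanation of why $k=3$ obeys the template of Theorem \ref{eventhm} while falling outside its hypotheses. In the even case one rewrites $1+\sum_{n\in P_k}q^n$ by the Jacobi triple product \eqref{jtp} and checks that $q\mapsto -q$ converts it into $\prod_{n\in S_k}(1-q^n)$, with the residues $0,1,k-3\equiv -1\pmod{k-2}$ emerging from the three triple-product factors. At $k=3$ one has $k-2=1$ and $k-3=0$, so those residues all collapse modulo $1$ and one factor formally becomes $(1;q)_\infty=0$; the generic factorization degenerates and must be replaced by the special quotient form of $\psi$ used above, which is what turns the restricted-partition count $p_{S_k}$ of the theorem into $\Pod$. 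I would be careful to state the sign bookkeeping under $q\mapsto -q$ and the recognition of the odd-distinct product explicitly, since this is the only place where the corollary genuinely departs from the theorem it is modeled on.
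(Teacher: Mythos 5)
Your argument is correct, but it takes a different route from the paper's. The paper disposes of this corollary in one line: it is the $k=6$ case of Theorem \ref{eventhm}, using the classical coincidence that the extended hexagonal numbers $P_6=\{n(2n\pm1)\}$ are exactly the triangular numbers $P_3$, together with the (implicit) fact that $p_{S_6}(n)$ --- the number of partitions of $n$ into parts $\equiv 0,\pm1\pmod 4$ --- equals $\operatorname{pod}(n)$. You instead rebuild the proof from scratch: Proposition \ref{prop1} applied to $\psi(q)=\phi_{P_3}(-1;q)$, the substitution $q\mapsto -q$, and the product manipulation $\psi(q)=(q^2;q^2)_\infty/(q;q^2)_\infty$, hence $1/\psi(-q)=(-q;q^2)_\infty/(q^2;q^2)_\infty=\sum_{n\ge0}\operatorname{pod}(n)q^n$. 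All of these steps check out (and your convergence bookkeeping with $|q|<1/2$ is fine). What the paper's route buys is brevity and the recognition that the corollary is literally a specialization of the theorem already proved; what yours buys is self-containedness and an explicit combinatorial identification with odd-distinct partitions, which the paper leaves unstated (it never spells out why $p_{S_6}(n)=\operatorname{pod}(n)$). Your closing discussion of the degeneration at $k=3$ (where $k-2=1$ and a triple-product factor becomes $(1;q)_\infty=0$) correctly explains why $k=3$ falls outside the theorem's hypotheses, but it is a detour: the paper's fix is not a ``special quotient form'' replacing the degenerate factorization, it is simply to re-index the same theta function $f(q,q^3)=\psi(q)$ as the $k=6$ member of the family.
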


We recall that an \emph{overpartition} is an unordered sum of
positive integers wherein the first occurrence of each integer appearing as a part may or 
may not 
be overlined (see ~\cite{CL}).  
Let $\overline{p}(n)$ denote the {number of 
overpartitions} of size $n$.

\begin{theorem}\label{simpleop}
The number of overpartitions of size $n$ is given by 
 \begin{equation*} \overline{p}(n)\  =\  (-1)^n{\sum_{\substack{c\in\mathcal{C}_{P_4}\\|c|=n} }} (-2)^{\ell(c)}, 
\end{equation*}
where the right-hand sum is taken over the set of size-$n$ compositions into square parts. \end{theorem}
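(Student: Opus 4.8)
The plan is to recognize the composition sum as a Taylor coefficient of $1/\varphi(q)$ and then match it against the classical product generating function for overpartitions. By the second identity in \eqref{theta}, for $|q|<\tfrac13$ we have $\frac{1}{\varphi(q)} = \sum_{c\in\mathcal C_{P_4}}(-2)^{\ell(c)}q^{|c|}$, so grouping by size gives $\sum_{|c|=n}(-2)^{\ell(c)} = [q^n]\,\varphi(q)^{-1}$. The alternating factor $(-1)^n$ in the claimed formula is exactly what results from the substitution $q\mapsto -q$: replacing $q$ by $-q$ (still with $|q|<\tfrac13$) yields $[q^n]\,\varphi(-q)^{-1} = (-1)^n\sum_{|c|=n}(-2)^{\ell(c)}$. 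Hence it suffices to prove that $\varphi(-q)^{-1}$ is the generating function for $\overline{p}(n)$.

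First I would record the standard overpartition generating function $\sum_{n\geq 0}\overline{p}(n)q^n = \frac{(-q;q)_\infty}{(q;q)_\infty}$, where each integer $k$ contributes a factor $\frac{1+q^k}{1-q^k}$, the $1+q^k$ accounting for the optional overlining of its first occurrence. Next I would compute $\varphi(-q)$ from the product in \eqref{phi}: since $(-q)^{2k}=q^{2k}$ and $(-q)^{2k-1}=-q^{2k-1}$, the substitution turns each factor $(1+q^{2k-1})^2$ into $(1-q^{2k-1})^2$, giving
\[
\varphi(-q) = \prod_{k=1}^\infty (1-q^{2k})(1-q^{2k-1})^2 = (q^2;q^2)_\infty\,(q;q^2)_\infty^{2}.
\]

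The key step is a short product manipulation identifying this with $\frac{(q;q)_\infty}{(-q;q)_\infty}$. From the elementary factorization $1+q^k = (1-q^{2k})/(1-q^k)$ one obtains $(-q;q)_\infty = (q^2;q^2)_\infty/(q;q)_\infty$, and the even/odd split of the product gives $(q;q)_\infty = (q^2;q^2)_\infty\,(q;q^2)_\infty$. Combining these, $\frac{(q;q)_\infty}{(-q;q)_\infty} = \frac{(q;q)_\infty^2}{(q^2;q^2)_\infty} = (q^2;q^2)_\infty\,(q;q^2)_\infty^{2} = \varphi(-q)$. Taking reciprocals, $\varphi(-q)^{-1} = \frac{(-q;q)_\infty}{(q;q)_\infty} = \sum_{n\geq 0}\overline{p}(n)q^n$, and comparing the coefficient of $q^n$ with the identity derived in the first paragraph completes the argument.

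I do not expect a serious obstacle here; the only point requiring care is the bookkeeping of convergence. The composition identity \eqref{theta} is guaranteed only for $|q|<\tfrac13$, so I would note that all coefficient extractions are Taylor coefficients at $q=0$, legitimate on the common disc $|q|<\tfrac13$ where $\varphi(-q)$ is analytic and nonvanishing, while the product formula for $\overline{p}(n)$ converges on the larger disc $|q|<1$. The two analytic functions agree wherever both are defined, so their Taylor coefficients coincide for every $n$, which is all that is needed.
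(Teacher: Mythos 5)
Your proof is correct and follows essentially the same route as the paper: both identify $\sum_{|c|=n}(-2)^{\ell(c)}$ with $[q^n]\,\varphi(q)^{-1}=[q^n]\,f(q,q)^{-1}$ via the composition expansion, apply $q\mapsto -q$, and match against the overpartition generating function $(-q;q)_\infty/(q;q)_\infty$. The only difference is cosmetic: where you derive $\varphi(-q)=(q;q)_\infty/(-q;q)_\infty$ by elementary product manipulations, the paper simply cites Corteel--Lovejoy for $(-q;q)_\infty/(q;q)_\infty = 1/f(-q,-q)$.
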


%
%
%
%
%

%
%

We note that the set $S_5$ is equal to $\mathbb{N}$, since every positive integer is congruent to $0, 1$ or $-1$ modulo 3, 
so $p_{S_5}(n) = p(n)$.   Thus, the $k=5$ case of Theorem~\ref{oddthm} yields the
following corollary.

\begin{corollary} \label{pofn} The number  of unrestricted partitions of size $n$ is given by
\[ p(n) = (-1)^n \sum_{\substack{c\in\mathcal{C}_{P_5}\\ |c|=n } }(-1)^{\ell_{5}^*(c) }, \] 
where the right-hand sum is taken over the set of size-$n$ compositions into pentagonal parts.\end{corollary}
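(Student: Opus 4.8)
The plan is to deduce Corollary~\ref{pofn} directly from Theorem~\ref{oddthm} by specializing to $k=5$, so the real content reduces to two verifications: that $S_5 = \mathbb N$ (hence $p_{S_5}(n) = p(n)$), and that the modified length statistic $\ell_5^*$ is the correct exponent appearing on the right-hand side. The first point is already noted in the excerpt: every integer is congruent to $0$, $1$, or $-1$ modulo $3 = k-2$, so $S_5 = \{n \in \mathbb N : n \equiv 0, \pm 1 \pmod 3\} = \mathbb N$, and therefore $\mathcal P_{S_5} = \mathcal P$ and $p_{S_5}(n) = p(n)$. Granting Theorem~\ref{oddthm}, substituting $k=5$ into its formula gives
\[
p(n) = p_{S_5}(n) = (-1)^n \sum_{\substack{c \in \mathcal C_{P_5} \\ |c| = n}} (-1)^{\ell_5^*(c)},
\]
which is exactly the claimed identity. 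So at the level of logic the corollary is an immediate substitution, and the only thing to confirm is that the pentagonal-number bookkeeping is being read correctly.

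First I would make the set $P_5$ explicit. With $k=5$ we have $k-2 = 3$ and $k-4 = 1$, so the $k$-gonal (pentagonal) numbers are $P_5 = \{n(3n \pm 1)/2 : n \in \mathbb N\}$, the classical generalized pentagonal numbers $1, 2, 5, 7, 12, 15, \dots$ arising in Euler's pentagonal number theorem. Next I would unwind the definition of $P_5^*$: taking $k=5$ in the definition gives $P_5^* = \{n(3n-1)/2 : n \in \mathbb Z,\ n \neq 0,\ n \equiv 0, 1 \pmod 3\}$. The point of this subset is to pick out exactly the pentagonal numbers that, in the underlying theta-function expansion $\varphi_{P_5}(z;q)$ feeding into Theorem~\ref{oddthm}, carry the ``$-1$'' sign rather than the ``$+1$'' sign; $\ell_5^*(c)$ then counts the parts of $c$ lying in $P_5^*$, and the parity $(-1)^{\ell_5^*(c)}$ records the product of those signs across the composition.

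The cleanest way to justify the statistic, if one wants more than a bare appeal to Theorem~\ref{oddthm}, is to retrace that theorem's mechanism in this special case. By Euler's pentagonal number theorem, $(q;q)_\infty = \sum_{n=-\infty}^\infty (-1)^n q^{n(3n-1)/2}$, so the reciprocal $1/(q;q)_\infty = \sum_n p(n) q^n$ is both the partition generating function and, via Proposition~\ref{prop1} applied to the appropriate $\phi_S$, a sum over compositions into pentagonal parts weighted by the signs $(-1)^n$ attached to each generalized pentagonal number $n(3n-1)/2$. Tracking which sign each pentagonal number receives is exactly the role of $P_5^*$ and the congruence condition $n \equiv 0, 1 \pmod 3$, and collecting the overall sign $(-1)^n$ out front accounts for the $(-1)^{|c|} = (-1)^n$ bookkeeping that Theorem~\ref{oddthm} produces. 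The main (though minor) obstacle is purely a sign-bookkeeping check: confirming that the congruence class selecting $P_5^*$ matches the parity assignment in Euler's identity, so that $(-1)^{\ell_5^*(c)}$ and the leading $(-1)^n$ combine to reproduce $p(n)$ rather than a signed variant. Since Theorem~\ref{oddthm} has been established in general, I would present the corollary as a one-line specialization and relegate this sign check to the verification that $S_5 = \mathbb N$.
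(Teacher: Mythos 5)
Your proposal is correct and matches the paper's proof exactly: the corollary is obtained by setting $k=5$ in Theorem~\ref{oddthm} and observing that $S_5=\mathbb{N}$, so $p_{S_5}(n)=p(n)$. The additional sign-bookkeeping discussion via Euler's pentagonal number theorem merely retraces the mechanism already contained in the proof of Theorem~\ref{oddthm} and is not needed for the specialization.
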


Another formula for $p(n)$ that is similar in style, but does not follow directly
from Theorem~\eqref{oddthm}, is as follows. We require another modified length statistic. 
Let $\widehat{P}:= \{ k(3k\pm 1)/2: k\in\mathbb{N}\text{ and } k \text{ is even} \}.$
Let $\widehat{\ell}(c)$ denote the number of parts of composition 
$c$ that lie in $\widehat{P}$.
\begin{theorem} \label{pofn2}
The number of unrestricted partitions of size $n$ is given by 
 \[ p(n) = {\sum_{\substack{c\in\mathcal{C}_{P_5} \\ |c| = n}}}
 (-1)^{\widehat{\ell}(c)}, \]
 where the right-hand sum is taken over the set of size-$n$ compositions into pentagonal parts.
\end{theorem}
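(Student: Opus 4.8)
The plan is to realize the pentagonal theta identity through the pentagonal number theorem and then invert it as a composition series, just as in the proofs of Theorems~\ref{eventhm} and~\ref{oddthm}, but with a careful bookkeeping of signs that isolates the \emph{even-index} pentagonal numbers. Recall Euler's pentagonal number theorem,
\[
(q;q)_\infty \ =\ \sum_{k=-\infty}^{\infty}(-1)^k q^{k(3k-1)/2}\ =\ 1+\sum_{k\geq 1}(-1)^k\bigl(q^{k(3k-1)/2}+q^{k(3k+1)/2}\bigr),
\]
so that $1/(q;q)_\infty=\sum_{n\geq 0}p(n)q^n$. The set $P_5$ consists exactly of the generalized pentagonal numbers $k(3k\pm 1)/2$ for $k\in\mathbb N$, and the sign $(-1)^k$ attached to each such term in the pentagonal number theorem is precisely what $\widehat{\ell}(c)$ is designed to track: $(-1)^k=+1$ when $k$ is even and $-1$ when $k$ is odd. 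The first step, then, is to rewrite $(q;q)_\infty$ in the form $\phi_{P_5}(\,\cdot\,;q)$-style auxiliary series of \eqref{fdef}, but with the sign on each part recorded individually rather than through a single variable $z$.

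Concretely, the second step is to observe that
\[
(q;q)_\infty\ =\ 1-\sum_{n\in P_5}\varepsilon(n)\,q^{n},
\]
where $\varepsilon(n)=(-1)^{k+1}$ for the unique $k\in\mathbb N$ with $n=k(3k\pm 1)/2$; equivalently $\varepsilon(n)=+1$ when the index $k$ is odd and $\varepsilon(n)=-1$ when $k$ is even, so that the part $n$ lies in $\widehat P$ exactly when $\varepsilon(n)=-1$. This puts $(q;q)_\infty$ in a shape to which the geometric-series inversion underlying Proposition~\ref{prop1} applies: expanding $1/(q;q)_\infty=\sum_{m\geq 0}\bigl(\sum_{n\in P_5}\varepsilon(n)q^n\bigr)^m$ and multiplying out, each monomial corresponds to a composition $c\in\mathcal C_{P_5}$ and contributes $\prod_{i}\varepsilon(c_i)\,q^{|c|}$. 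Since $\prod_i\varepsilon(c_i)=(-1)^{\#\{i:\varepsilon(c_i)=-1\}}=(-1)^{\widehat{\ell}(c)}$, collecting terms of total degree $n$ yields
\[
p(n)\ =\ \sum_{\substack{c\in\mathcal C_{P_5}\\ |c|=n}}(-1)^{\widehat{\ell}(c)},
\]
which is the claimed formula. Note that here, in contrast with Theorems~\ref{eventhm} and~\ref{oddthm}, there is no overall factor $(-1)^n$, because the pentagonal number theorem already supplies the correct signs internally, leaving no residual parity to absorb.

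I would expect the main obstacle to be purely notational rather than conceptual: verifying that the index-parity assignment $n\mapsto\varepsilon(n)$ is well defined on $P_5$ and that $\widehat P$ is precisely the $\varepsilon=-1$ locus. One must check that no generalized pentagonal number is represented by two different indices $k$ (it is not, since the map $k\mapsto k(3k\pm1)/2$ is injective on $\mathbb N$ for each choice of sign, and the two families $k(3k-1)/2$ and $k(3k+1)/2$ are disjoint as $k$ ranges over $\mathbb N$), so that $\varepsilon$ and hence $\widehat{\ell}$ are unambiguous. The remaining care is to confirm absolute convergence of the multiplied-out series for $|q|$ small, exactly as in the remark following Proposition~\ref{prop1}, which justifies the term-by-term rearrangement; since this is a formal-power-series identity in $q$ it in fact holds as an identity of coefficients regardless, so convergence is needed only to license the manipulation and can be handled by restricting to a suitable disk $|q|<\rho$ and invoking the proposition.
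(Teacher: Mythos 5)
Your proposal is correct and is essentially the paper's proof written out in full: the paper applies Lemma~\ref{lemma1} to the reciprocal of $f(-q,-q^2)$, which by the Jacobi triple product is exactly $(q;q)_\infty$ expanded by Euler's pentagonal number theorem, and your sign bookkeeping via $\varepsilon(n)=(-1)^{k+1}$ matches the $(-a_n/a_0)^{m_n}$ factors in that lemma, with $\widehat{P}$ precisely the $\varepsilon=-1$ locus. Your observation that no overall $(-1)^n$ appears because the pentagonal number theorem supplies the signs internally (so no $q\mapsto -q$ substitution is needed) is also the correct point of contrast with Theorems~\ref{eventhm} and~\ref{oddthm}.
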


\begin{remark}  Using a theorem of Meinardus~\cite{GM54}, one can show that for $k\geq 5$, 
\begin{equation} p_{S_k}(n) \sim \frac{1}{8n} \csc\left( \frac{\pi}{k-2} \right) \exp\left( \pi \sqrt{\frac{2n}{k-2}}  \right)
\mbox{ as   } n\to\infty.
 \end{equation}
\end{remark}
%

\section{Proofs of identities from the preceding section} \label{section2}

\subsection{Central lemma and useful notations}
The following  lemma was proved by Salem~\cite{Salem}
in a different form, and re-proved by the authors in~\cite{SS_comb1} using the multinomial theorem. 
\begin{lemma}\label{lemma1}
For $a_i\in\mathbb C, a_0\neq 0$, let $g(q):=\sum_{n\geq 0}a_n q^n$ be analytic on $\{q\in \mathbb C : |q|<1\}$, and set $G(q):=1/g(q)$. Then on the domain of analyticity of $G(q)$, we have \[G(q)\  =\  \sum_{n=0}^{\infty}b_n q^n, \]
with the coefficient $b_n$ given as a sum over compositions $c\in\mathcal C$ of size $n$:\begin{equation*} \label{recipcoeff}
b_n\  =\  
a_0^{-1}{\sum_{\substack{c\in\mathcal{C}\\|c| = n}}} 
\left(-\frac{a_1}{a_0}\right)^{m_1 }
\left(-\frac{a_2}{a_0}\right)^{m_2 } 
\left(-\frac{a_3}{a_0}\right)^{m_3 }  \cdots 
\left(-\frac{a_n}{a_0}\right)^{m_n} .
\end{equation*}
\end{lemma}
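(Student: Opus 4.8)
The plan is to reduce to the case $a_0 = 1$ and then expand $1/g$ as a geometric series whose terms are naturally indexed by compositions, exactly as in the multinomial-theorem argument referenced in the text. First I would factor $g(q) = a_0\bigl(1 + h(q)\bigr)$, where $h(q) := \sum_{n \geq 1} (a_n/a_0)\, q^n$ satisfies $h(0) = 0$. Since $g$ is analytic with $g(0) = a_0 \neq 0$, the function $h$ is analytic and vanishes at the origin, so there is a disk $|q| < \rho$ on which $|h(q)| < 1$; throughout this disk the geometric series
\[
G(q) = \frac{1}{a_0} \sum_{j=0}^{\infty} (-1)^j\, h(q)^j
\]
converges absolutely.

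Next I would expand each power $h(q)^j$ by the distributive law (equivalently, the multinomial theorem). Writing out $h(q)^j = \bigl(\sum_{n \geq 1} (a_n/a_0)\, q^n\bigr)^j$ produces one term for every ordered $j$-tuple $(c_1, \ldots, c_j)$ of positive integers, namely $\prod_{i=1}^{j} (a_{c_i}/a_0)\, q^{c_i}$. Each such tuple is precisely a composition $c \in \mathcal{C}$ of length $\ell(c) = j$ and size $|c| = c_1 + \cdots + c_j$, and conversely every composition arises this way exactly once. Summing over all $j \geq 0$ therefore reindexes the doubly-indexed sum as a single sum over all compositions:
\[
G(q) = a_0^{-1} \sum_{c \in \mathcal{C}} (-1)^{\ell(c)} \Bigl(\prod_{i=1}^{\ell(c)} \tfrac{a_{c_i}}{a_0}\Bigr) q^{|c|}.
\]

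Then the remaining bookkeeping is routine. Grouping the product by the value of the parts gives $\prod_{i=1}^{\ell(c)} (a_{c_i}/a_0) = \prod_{i \geq 1} (a_i/a_0)^{m_i}$, and since $\ell(c) = \sum_i m_i$ one absorbs the sign to write $(-1)^{\ell(c)} \prod_i (a_i/a_0)^{m_i} = \prod_i (-a_i/a_0)^{m_i}$, where only parts $i \leq n$ can occur in a composition of size $n$. Collecting the coefficient of $q^n$ yields the stated formula for $b_n$ (with the $j=0$ term giving $b_0 = a_0^{-1}$). Finally, because the rearranged series agrees with the Taylor expansion of the analytic function $G$ on $|q| < \rho$, uniqueness of Taylor coefficients pins down the $b_n$, and the power-series representation then holds on the largest disk about the origin contained in the domain of analyticity of $G$, i.e.\ out to its nearest singularity.

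I expect the main obstacle to be the justification of the rearrangement: regrouping the family $\{(-1)^j \prod_i (a_{c_i}/a_0)\, q^{c_i}\}$ into a sum indexed by compositions, and interchanging it with the outer geometric summation, requires absolute convergence. This is supplied by restricting to the disk where $\sum_{n\geq 1} |a_n/a_0|\,|q|^n < 1$, so that the dominating series $\sum_j \bigl(\sum_{n\geq 1} |a_n/a_0|\,|q|^n\bigr)^j$ converges and Fubini/Tonelli for absolutely convergent series applies; every other step is either a formal power-series manipulation or the standard combinatorial translation between ordered tuples and compositions.
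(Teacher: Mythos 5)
Your proof is correct and takes essentially the same route as the proof the paper relies on: the paper itself does not reprove the lemma but cites Salem and the authors' earlier work, describing that argument as interpreting $G(q)$ via a geometric series together with the multinomial theorem, which is exactly your expansion of $a_0^{-1}\sum_j(-1)^j h(q)^j$ reindexed over compositions. Your handling of convergence (absolute convergence on a disk where $\sum_{n\geq 1}|a_n/a_0||q|^n<1$, then uniqueness of Taylor coefficients to extend to the full disk of analyticity of $G$) is sound and completes the analytic side properly.
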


\begin{remark}We refer the reader to the aforementioned references for proofs of this lemma.  \end{remark}

We employ Ramanujan's theta functions, together with Lemma \ref{lemma1}, to prove the theorems and corollaries. For $a,b \in \mathbb C$ such that $| a b | < 1$, let 
\begin{equation}\label{Rtheta} f(a,b) := \sum_{n=-\infty}^\infty a^{n(n+1)/2} b^{n(n-1)/2} =
\prod_{j=0}^\infty (1 + a^{j+1} b^{j}) (1 + a^{j} b^{j+1}) (1 - (a b)^{j+1}) \end{equation}
denote the general form of the \emph{Ramanujan theta function}~\cite[p. 6, Eq. (1.2.1)]{Berndt};
 the second equality follows
from the Jacobi triple product identity~\cite[p. 10, Eq. (1.3.10)]{Berndt}. 
We employ the usual abbreviation
%
  \[ (z_1, z_2, \dots, z_r; q)_\infty :=  \prod_{j=1}^r (z_j; q)_\infty, \]
with $z_j\in \mathbb C$.   Then one may re-write \eqref{Rtheta} in the form 
  \[ f(a,b) = (-a, -b, ab; ab)_\infty. \]
We note that $f(q,q^3)=\psi(q)$ from \eqref{psi}, and $f(q,q)=\varphi(q)$ from \eqref{phi}, with $|q|<1$.
 The Ramanujan theta functions of interest to us will be $f(\pm q, \pm q^{k-3})$, with the $\pm$ signs not necessarily matching. We note that, by consideration of products of Dedekind eta functions, 
     \[ q^{(k-4)^2/ (8k - 16)} f( \pm q,  \pm q^{k-3}) \] is a modular form of weight $1/2$. Under appropriate changes of variables, Ramanujan's theta functions are
equivalent to Jacobi theta functions; see the discussion in~\cite[p. 37 ff.]{S17}.
  
  \subsection{Proofs of Propositions \ref{prop1} and \ref{symm}}
\begin{proof}[Proof of Proposition \ref{prop1}] In Lemma \ref{lemma1}, set $a_0=1$, and  for $k\geq 1$, set $a_k=q^k$ if $k\in S$ and $a_k=0$ otherwise; then reorganize  $G(q)=\sum_{n\geq 0}b_n q^n$ as a sum over compositions in  $\mathcal C_S$. The proof of Lemma \ref{lemma1} in  \cite{SS_comb1}  interprets $G(q)=1/\phi_S(z; q)$ as a geometric series; consequentially, $|q|<1$ and $|\sum_{n\in S}q^n|<|z^{-1}|$ are necessary conditions for convergence on the right-hand side of the proposition. 
Recalling $\phi_S(0;q)=1$ identically, for $z\neq 0$ take $|q|<(1+|z|)^{-1}<1$.  Then by the triangle inequality, $|\sum_{n\in S}q^n| \leq \sum_{n\in S}|q|^n \leq \sum_{n\geq 1}|q|^n < \sum_{n\geq 1}(1+|z|)^{-n} =|z^{-1}|$, which proves the sufficient condition for convergence in $q$. We note this condition is necessary when $z=1, S=\mathbb N$, since $\sum_{c\in\mathcal C}q^{|c|}=1+\sum_{n\geq 1}2^{n-1}q^n=\frac{1-q}{1-2q}$ is convergent when $|q|< 1/2$.
\end{proof}

%
%

\begin{proof}[Proof of Proposition \ref{symm}] The identity follows directly from \eqref{multi} and the hypotheses; for partition $\lambda$, there are $\frac{\ell(\lambda)!}{m_1(\lambda)! m_2(\lambda)! \cdots m_n(\lambda)!}$ identical terms $\widehat{g}(c)$ on the right  side, whose sum is $g(\lambda)$. 
\end{proof}

\subsection{Proof of Theorem~\ref{eventhm}}
For $k\geq 6$ and $k$ even, 
  \[ f(q,q^{k-3}) = \sum_{j=-\infty}^\infty q^{j((k-2)j - (k-4))/2 }  = 1 + \sum_{n\in P_k}  q^n. \]
Our argument requires $|q|<1/2$ by Proposition \ref{prop1}.   By Lemma~\ref{lemma1},
\begin{equation} \label{rs1}  \frac{1}{f(q,q^{k-3})} = \sum_{n=0}^\infty q^n 
\sum_{\substack{c\in\mathcal{C}_{P_k} \\ |c|=n}}(-1)^{\ell(c)}.
\end{equation}
Now, by~\cite[p. 4, Eq. (1.2.3)]{And}, 
\begin{equation} \label{cs11}
  \frac{1}{f(-q,-q^{k-3})} =  \frac{1}{(q, q^{k-3}, q^{k-2}; q^{k-2})_\infty} =
  \sum_{n=0}^\infty p_{S_k}(n) q^n.
\end{equation}
Replace $q$ by $-q$ in~\eqref{cs11} to obtain
\begin{equation} \label{cs1}
  \frac{1}{f(q,q^{k-3})} =  \frac{1}{(-q, -q^{k-3}, q^{k-2}; q^{k-2})_\infty} =
  \sum_{n=0}^\infty (-1)^n  p_{S_k}(n) q^n.
\end{equation}
  Comparing coefficients of $q^n$ in~\eqref{rs1} and~\eqref{cs1}, yields
Theorem~\ref{eventhm}.  \hfill $\square$

\subsection{Proof of Theorem~\ref{oddthm}} 
 For $k\geq 5$ and $k$ odd, 
  \[ f(q,-q^{k-3}) = \sum_{j=-\infty}^\infty (-1)^j  (-q)^{j((k-2)j - (k-4))/2 }  
  = 1 + \sum_{n\in P^*_k}  q^n - \sum_{n\in P_k\setminus P^*_k} q^n. \]
A similar triangle inequality application to that  in the proof of Proposition \ref{prop1}, gives that $|q|<1/2$ will  suffice for convergence in our arguments in this case. Then by Lemma~\ref{lemma1},
\begin{flalign} \label{rs2}  \frac{1}{f(q,-q^{k-3})} &=
1 + \sum_{n=1}^\infty q^n 
\sum_{\substack{c\in\mathcal{C}_{P_k} \\ |c| = n }}  
\left( \prod_{i\in P_k\setminus P^*_k} 1^{m_i} \right)
\left( \prod_{j\in P^*_k} (-1)^{m_j} \right)
\\ \nonumber &= \sum_{n=0}^\infty q^n 
\sum_{\substack{c\in\mathcal{C}_{P_k} \\ |c|=n}} (-1)^{\ell_{k}^*(c)}.
\end{flalign}
Also, by~\cite[p. 4, Eq. (1.2.3)]{And},
\begin{equation} \label{cs12}
  \frac{1}{f(-q, -q^{k-3})} =  \frac{1}{(q, q^{k-3}, q^{k-2}; q^{k-2})_\infty} =
  \sum_{n=0}^\infty  p_{S_k}(n) q^n.  
\end{equation}
Replacing $q$ by $-q$ in~\eqref{cs12} gives
\begin{equation} \label{cs2}
  \frac{1}{f(q, -q^{k-3})} =  \frac{1}{(-q, q^{k-3}, -q^{k-2}; -q^{k-2})_\infty} =
  \sum_{n=0}^\infty (-1)^n  p_{S_k}(n) q^n.
\end{equation}
  Comparing coefficients of $q^n$ in~\eqref{rs2} and~\eqref{cs2}, yields
Theorem~\ref{oddthm}.    \hfill $\square$
%
%
%

\subsection{Proof of Theorem~\ref{simpleop}} 
Recall that 
\[ f(q,q) = \sum_{j=-\infty}^\infty q^{j^2} = 1 + 2\sum_{n\in{P_4}} q^n. \]
Take  $|q|<1$ such that $|\sum_{n\in{P_4}} q^n|<1/2$.  By Lemma~\ref{lemma1},
\begin{equation} \label{ramphi}
  \frac{1}{f(q,q)} = 1 + \sum_{n=1}^\infty q^n \sum_{\substack{ c\in\mathcal{C}_{P_4} \\
  |c| = n}  }
  (-2)^{\ell(c) }.
\end{equation}  
  By~\cite[p. 1623, Eq. (1.1)]{CL}, 
  \begin{equation} \label{opgf}
     \frac{(-q;q)_\infty}{(q;q)_\infty} = \frac{1}{f(-q,-q)} = \sum_{n=0}^\infty \overline{p}(n) q^n.
  \end{equation}
Replace $q$ by $-q$ in~\eqref{opgf} to obtain
\begin{equation}  \label{mopgf}
   \frac{1}{f(q,q)} = \sum_{n=0}^\infty (-1)^n\overline{p}(n) q^n.
\end{equation}
Comparing coefficients of $q^n$ in~\eqref{ramphi} and~\eqref{mopgf} yields
Theorem~\ref{simpleop}. \hfill $\square$

\subsection{Proof of Theorem \ref{pofn2}}
The theorem follows from Lemma~\ref{lemma1} in the same way as
the others, but by taking the reciprocal of the series $f(-q, -q^2)$. \hfill $\square$

\subsection{Proofs of Corollaries \ref{podthm} and \ref{pofn}}
 Corollary~\ref{podthm} is the $k=6$ case of Theorem~\ref{eventhm}, noting the set $P_3$ of triangular numbers is well known to equal the set $P_6$ of extended hexagonal numbers. 
Corollary~\ref{pofn} is the $k=5$ case of Theorem~\ref{oddthm}.  \hfill $\square$

\section{Further combinatorial applications of Lemma~\ref{lemma1}}  \label{section3}
\subsection{More on reciprocals of power series}
The key elements of the proofs in the preceding section are as follows:  We start with a lacunary series where the nonzero 
coefficients are all equal to $1$ (or $\pm 1$ with the signs in a predictable 
pattern) and where the nonzero coefficients have exponents that belong to
an easily identified set.  Then interpret the reciprocal series as a generating
function for partitions with certain restrictions on the parts. 

\begin{remark}For the sake of this section, we proceed in the spirit of formal power series, although in most cases $|q|<1$ suffices for convergence; previous convergence criteria  continue to hold.\end{remark}
 One could find further formal power series that have similar characteristics.  For example,
if $\alpha < \beta$ are positive integers of the same parity, then $f(q^\alpha, q^\beta)$
is a lacunary series with nonzero coefficients all equal to $1$, and occurring at
powers of $q$ in the set
\[ R_{\alpha,\beta} := \left\{ j \Big(  (\alpha+\beta)j + (\alpha-\beta)\Big)/2 : j \in \mathbb{Z} \right\}. \]
We note that $1/f( -q^\alpha, -q^\beta)$ is the generating function for $p_{T_{\alpha,\beta} }(n)$
with  $T_{\alpha,\beta} \subset \mathbb N$ defined by  \[ T_{\alpha,\beta} := \{ n\in\mathbb{N} : n \equiv 0, \pm\alpha \hskip -3mm\pmod{\alpha+\beta} \}. \]
Using the methods above, and setting $R_{\alpha,\beta}^* := R_{\alpha,\beta} \setminus \{ 0 \}$, we conclude that
\begin{equation}
  p_{T_{\alpha,\beta} }(n) = (-1)^n \sum_{ \substack{ c\in\mathcal{C}_{R_{\alpha,\beta}^*} \\ |c| = n   }  }
  (-1)^{\ell(c)}.
\end{equation}
 
  We can apply the same technique to a lacunary series where the 
 nonzero coefficients differ from $\pm 1$, but the resulting sum over compositions will be more complicated.   For instance the Jacobi identity~\cite[p. 14, Thm. 1.3.9]{Berndt} is
 \begin{equation} \label{JI}
  \prod_{j=1}^\infty (1-q^j)^3 = \sum_{n=0}^\infty (-1)^n (2n+1) q^{n(n+1)/2}.
 \end{equation} 
 Conveniently, the reciprocal of~\eqref{JI} is the generating function for 
 the enumeration function of a well-known combinatorial object, namely $p^{(3)}(n)$, the
 number of three-colored partitions of $n$:
 \begin{equation} \label{p3nGF}
    \sum_{n=0}^\infty p^{(3)}(n) q^n = \prod_{j=1}^\infty \frac{1}{(1-q^j)^3}.
 \end{equation}
Thus by Lemma~\ref{lemma1}, 
\begin{equation} \label {p3formula}
   p^{(3)}(n) = \sum_{\substack{c \in \mathcal C_{P_3} \\ |c| = n}} 
   3^{m_1} (-5)^{m_3} 7^{m_6} (-9)^{m_{10}} 11^{m_{15}} \cdots 
   =  \sum_{\substack{c \in \mathcal C_{P_3} \\ |c| = n}} 
  (-1)^{\ell^+(c) } \prod_{j\geq 1} (2j+1)^{m_{j(j+1)/2}} ,
\end{equation}
where $\ell^+(c)$ is the total number of parts of $c$ in the set
$\{3,10, 21, 36, 55, 78, 105, 136, \dots \} = \{ n(2n+1) : n\in\mathbb{N} \}$.

While~\eqref{p3formula} is perhaps less elegant than the results above due to the
more complicated summands, the presence of factors greater than $1$ raised to positive
powers accelerates the efficiency of the formula.   For example,
consider $p^{(3)}(5) = 108$.
The total $108$ can be found by summing over the compositions $1+1+1+1+1$, a contribution
of $3^5$, and the three compositions associated with the partition $3+1+1$, a contribution of 
$3^2 (-5)$, yielding $3^5 + 3(3^2)(-5) = 108$.  
This calculation is much quicker than directly enumerating the  
three-color partitions of $5$.  
\begin{remark}By  Meinardus~\cite{GM54}, the asymptotic growth rate is
\begin{equation} \label{p3asy}
  p^{(3)}(n) \sim \frac{1}{8\sqrt{2}\ n^{3/2}} e^{\pi \sqrt{2n}} \mbox{   as   } n\to\infty.
\end{equation}
\end{remark}

Encouraged by the potential for efficient computations, we look for additional lacunary series to which
our method can conveniently be applied.
In Ono and Robins~\cite[p. 1022, Eqs. (7), (9)]{OR}, one finds identities 
equivalent to the following:
\begin{align}
   \frac{(q;q)_\infty^5}{(q^2;q^2)_\infty^2} &= \sum_{j=-\infty}^\infty (1-6j) q^{j(3j-1)/2}
   \label{or7}, \\
   \frac{ (q;q)_\infty^2 (q^4;q^4)_\infty^2}{(q^2;q^2)_\infty} 
   &= \sum_{j=-\infty}^\infty (3j+1) q^{j(3j+2)} \label{or9}.
\end{align}
We can interpret the reciprocals of~\eqref{or7} and~\eqref{or9}, respectively, as the generating functions
\begin{align}
  \frac{(q^2;q^2)_\infty^2 }{(q;q)_\infty^5} = 
  \frac{(-q;q)_\infty^2}{(q;q)_\infty^3} &= \sum_{n=0}^\infty r(n) q^n, \\
  \frac{ (q^2;q^2)_\infty }{ (q;q)_\infty^2 (q^4;q^4)_\infty^2}  = 
  \frac{(-q;q)_\infty}{ (q;q)_\infty (q^4;q^4)_\infty^2} & = \sum_{n=0}^\infty s(n) q^n,
\end{align}
 where $r(n)$ denotes the number of three-colored (say, red, yellow, blue) overpartitions
 of size $n$ wherein no overlined blue parts appear; and
 $s(n)$ denotes the number of three-colored overpartitions of size $n$
 wherein no overlined yellow or blue parts appear, and all yellow and blue parts are
 multiples of $4$.  
 Applying Lemma~\ref{lemma1} and  comparing coefficients of $q^n$ yields 
 \begin{align}
 r(n) &= \sum_{ \substack{ c\in\mathcal{C}_{P_5} \\ |c| = n  } }
 5^{m_1} (-7)^{m_2} 11^{m_5} (-13)^{m_7} 17^{m_{12}} (-19)^{m_{15}}\cdots
 = \sum_{ \substack{ c\in\mathcal{C}_{P_5} \\ |c| = n  } }
    \prod_{j\in\mathbb{Z^*}} (6j-1)^{m_{j(3j-1)/2}}, \\
 s(n) &=  \sum_{ \substack{ c\in\mathcal{C}_{U} \\ |c| = n  } }
 2^{m_1} (-4)^{m_5} 5^{m_8} (-7)^{m_{16}}  8^{m_{21}} (-10)^{m_{33}} \cdots
  =  \sum_{ \substack{ c\in\mathcal{C}_{U} \\ |c| = n  } }
 \prod_{j\in\mathbb Z^*} (-1-3j)^{m_{j(3j+2)}}, 
 \end{align}
 where $\mathbb Z^*$ denotes the set of nonzero integers, and  we let  $U:= \{ j(3j+2) : j\in \mathbb{Z}^*
\}.$
 
 \begin{remark} Again, Meinardus' theorem~\cite{GM54} gives us the asymptotic growth rates as $n\to \infty$:
 \begin{align}
  r(n) &\sim  \frac{  1 }{12\sqrt{2}\ n^{3/2} } e^{2\pi\sqrt{2n/3}}, \\
  s(n) &\sim \frac{  1 }{6\ n^{3/2}} e^{2\pi\sqrt{n/3}}.
   \end{align}\end{remark}
   
\subsection{Ratios of power series}
The first Rogers--Ramanujan identity~\cite[p. 328 (2)]{R94} in its analytic form is given by
\begin{equation} \label{rr1}
 \sum_{n=0}^\infty \frac{q^{n^2}}{(1-q)(1-q^2)\cdots(1-q^n)} = \frac{1}{(q,q^4;q^5)_\infty}
\end{equation}  for $|q|<1$.
Interpreting the left and right sides of~\eqref{rr1} as generating functions, 
MacMahon~\cite[p. 33]{MacMahon} obtained the following. 
\begin{theorem*}[First Rogers--Ramanujan identity, partition version]
  The number of parttions of size $n$ into parts differing by at least $2$ equals the number
  of partitions of size $n$ into parts $\equiv\pm 1\pmod{5}$.
\end{theorem*}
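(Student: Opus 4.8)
The plan is to read off both sides of the analytic identity~\eqref{rr1} as partition generating functions and then compare coefficients of $q^n$. The right-hand side is the easy side. Since $(q,q^4;q^5)_\infty = \prod_{k\geq 0}(1-q^{5k+1})(1-q^{5k+4})$, setting $z=1$ and $S:=\{m\in\mathbb{N} : m\equiv \pm 1 \pmod 5\}$ in the product-sum formula~\eqref{partgen} gives
\begin{equation*}
\frac{1}{(q,q^4;q^5)_\infty} \;=\; \sum_{\lambda\in\mathcal{P}_S} q^{|\lambda|} \;=\; \sum_{n\geq 0} p_S(n)\,q^n,
\end{equation*}
so the coefficient of $q^n$ on the right of~\eqref{rr1} is exactly the number of partitions of $n$ into parts $\equiv\pm 1\pmod 5$.

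The substance of the argument lies in interpreting the left-hand side, and the hard part will be identifying each summand $q^{n^2}/\big((1-q)(1-q^2)\cdots(1-q^n)\big)$ combinatorially. Writing $(q;q)_n:=(1-q)(1-q^2)\cdots(1-q^n)$, I would first recall (via~\eqref{partgen} with $S=\{1,\dots,n\}$, together with conjugation of Young diagrams) that $1/(q;q)_n=\sum_{\mu}q^{|\mu|}$, the sum over partitions $\mu$ with at most $n$ parts. The key step is then a \emph{staircase bijection}: given a partition $\lambda=(\lambda_1>\cdots>\lambda_n\geq 1)$ whose consecutive parts differ by at least $2$, I would define $\mu_i:=\lambda_i-(2(n-i)+1)$ for $1\leq i\leq n$. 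The gap condition $\lambda_i-\lambda_{i+1}\geq 2$ forces $\mu_i\geq\mu_{i+1}$, and $\lambda_n\geq 1$ forces $\mu_n\geq 0$, so $\mu$ is a partition with at most $n$ parts; the inverse $\lambda_i=\mu_i+2(n-i)+1$ recovers $\lambda$, making this a bijection. Since $\sum_{i=1}^n\big(2(n-i)+1\big)=1+3+\cdots+(2n-1)=n^2$, we get $|\lambda|=|\mu|+n^2$, and hence
\begin{equation*}
\sum_{\substack{\lambda:\ \ell(\lambda)=n \\ \text{parts differ by}\,\geq 2}} q^{|\lambda|} \;=\; \frac{q^{n^2}}{(q;q)_n}.
\end{equation*}

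Summing over $n\geq 0$—with $n=0$ contributing the empty partition—identifies the left-hand side of~\eqref{rr1} as $\sum_{m\geq 0} d(m)\,q^m$, where $d(m)$ counts partitions of $m$ into parts differing by at least $2$. Comparing the coefficients of $q^n$ across~\eqref{rr1} then yields $d(n)=p_S(n)$, which is the assertion. I expect the only genuine obstacle to be the careful verification that the staircase subtraction is a true bijection between partitions into exactly $n$ parts with minimal difference $2$ and partitions into at most $n$ parts, in particular checking the boundary behavior ($\lambda_n=1$ giving $\mu_n=0$) and the weak-decrease property at each index. This is elementary, but it is precisely where the combinatorial content of the left-hand side resides; once it is established, the partition-theoretic identity follows formally from~\eqref{rr1} and~\eqref{partgen}.
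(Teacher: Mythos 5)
Your proposal is correct and follows exactly the route the paper intends: the paper takes the analytic identity~\eqref{rr1} as given and simply cites MacMahon for the step of ``interpreting the left and right sides as generating functions,'' which is precisely what you carry out (the product expansion of $1/(q,q^4;q^5)_\infty$ via~\eqref{partgen} for the right side, and the staircase subtraction $\mu_i=\lambda_i-(2(n-i)+1)$ with $\sum_{i=1}^n(2(n-i)+1)=n^2$ for the left side). The details you supply, including the verification that the gap-$2$ condition translates into $\mu_i\geq\mu_{i+1}$, are the standard and correct way to fill in that citation.
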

   
   As the Rogers--Ramanujan identities are among the most celebrated results in $q$-series,
it is tempting to see what can be said about them in our present context.  
Let us define $rr(n)$ by
\[  \sum_{n=0}^\infty rr(n) q^n = \frac{1}{(q, q^4;q^5)_\infty}. \]Thus  $rr(n)$ denotes the number
of partitions of size $n$ into parts $\equiv\pm 1\pmod{5}$, as well as the number of partitions of size $n$ into
parts that differ by at least two.  

Following our earlier procedure, since 
\begin{multline} 
(q,q^4;q^5)_\infty = 1 - q - q^4 + q^5 - q^6 + q^7 - q^9 + 2q^{10} - 2q^{11} + q^{12} +
  q^{13} - 2q^{14} + 3q^{15} - 3q^{16} + 2q^{17} - 3q^{19} \\+ 5q^{20} - 5q^{21}  + 3q^{22}
    + q^{23} - 5q^{24} + 7q^{25} - 7 q^{26} + 4q^{27} + q^{28} -7q^{29}+ 11q^{30} - \dots,
 \end{multline} we can apply Lemma~\ref{lemma1} and extract the coefficients of $q^n$ to find that
\begin{equation} \label{weirdrr}  rr(n) = \sum_{\substack{c\in A \\ |c| = n} }(-1)^{m_5 + m_7 + m_{12} + m_{13}} 
2^{m_{11}+m_{14}} (-2)^{m_{10} + m_{17} + m_{33}} 3^{m_{16}+m_{19}}
(-3)^{m_{15}+m_{22}} \cdots 
\end{equation}
for the set $A = \mathbb{N}\setminus\{ 2,3, 8, 18\}$; but the general summand on the right-hand side of~\eqref{weirdrr}
has no easily discernible pattern.   Accordingly, we proceed by reorganizing the infinite product:
\begin{align}
   \frac{1}{(q,q^4;q^5)_\infty} &= \frac{ f(-q^2,-q^3)}{(q;q)_\infty} = 
   \left( \sum_{j=-\infty}^\infty (-1)^j q^{j(5j-1)/2} \right) \left( \frac{1}{ \sum_{h=\infty}^\infty (-1)^h q^{h(3h-1)/2} } \right)
\\
\nonumber &= \left( \sum_{i=0}^\infty a_i q^i \right) \left( \sum_{h=0}^\infty q^h \sum_{ \substack{ c\in{\mathcal{C}_{P_5}} \\ 
|c| = h}  } (-1)^{\widehat{\ell}(c)}  \right) = \sum_{n=0}^\infty q^n \left( \sum_{i=0}^n a_i \sum_{ \substack{ c\in{\mathcal{C}_{P_5}} \\ |c| = n-i } }
(-1)^  {\widehat{\ell} (c)  } \right),
\end{align} where the last equality follows by taking the Cauchy product; and so
\begin{equation}
  rr(n) = \sum_{i=0}^n a_i \sum_{ \substack{ c\in \mathcal{C}_{P_5}\\ |c| = n-i  }  } (-1)^{ \widehat{\ell } (c )  },
\end{equation} where
\[  a_i =   \left\{ \begin{array}{rl} 1 & \mbox{if $i$ is of the form $10j^2\pm j$ for some nonnegative integer $j$,  }\\
                                                -1 & \mbox{if $i$ is of the form $10j^2\pm 9j +2$ for some nonnegative integer $j$,}
\\                                                0 & \mbox{otherwise.} \end{array}  \right. \]
\begin{remark}
As determined by Lehner~\cite[p. 655, Eq. (12.4)]{L41},
\[ rr(n) \sim \frac{ 1 }{  4 \sqrt[4]{15}  \sqrt{ \frac{ 5 - \sqrt{5}  }{ 8 }  }\  n^{3/4} } e^{ 2\pi \sqrt{n/15}  } 
 \mbox{  as  }
n\to\infty.\]\end{remark}

The first Rogers--Ramanujan identity was used as an illustrative example, but the preceding technique could
be used to find a sum-over-restricted-compositions expression for the coefficient of $q^n$ in the 
Maclaurin series expansion of any quotient of Ramanujan theta functions, such as appear in the identities of
Slater~\cite{S52}; see also~\cite[Appendix A]{S17}.
   
 \section{Composition-theoretic Dirichlet series} \label{section4}
 
Here we note connections of composition-theoretic series to partition zeta functions, partition-theoretic series analogous to the Riemann zeta function $\zeta(s):=\sum_{n\geq 1}1/n^s, \operatorname{Re}(s)>1,$ introduced by the first author in \cite{Robert_zeta}. Expanding in the additive-multiplicative direction, the first author in \cite{Robert_arithmetic, Robert_PhD} outlines a multiplicative theory of integer partitions parallel in many respects to major threads of elementary and analytic number theory, beginning with the definition of a multiplication operation between partitions, which is the multiset union of their parts.\footnote{See \cite{supernorm, OSW2, Robert_PhD, SS-zeta} for further reading about this additive-multiplicative theory.} Compositions do not admit the same theory, since they are order-dependent;  their multiplication operation is non-commutative, so a non-commutative analogue of number theory would arise.

In \cite{Robert_zeta, Robert_arithmetic, Robert_PhD, SS-norm}, a multiplicative statistic on integer partitions is introduced, the {\it norm} $N(\lambda)$, defined by $N(\emptyset):=1$, and for partition $\lambda=(\lambda_1, \lambda_2, \dots, \lambda_r)$, by the product of the parts: 
\begin{equation} N(\lambda) := \lambda_1 \lambda_2 \lambda_3 \cdots \lambda_r. 
\end{equation}
This statistic is invariant with respect to the order of the parts; we will extend the norm statistic to compositions as well, such that $N(c)$ denotes the product of the parts of composition $c$.

A {\it partition zeta function} $\zeta_{\mathcal P'}(s)$ is defined in \cite{Robert_zeta} by the following sum over partitions in the proper subset $\mathcal P' \subsetneq \mathcal P$:
\begin{equation}\label{pzetadef} \zeta_{\mathcal P'}(s):=\sum_{\lambda\in\mathcal P'} N(\lambda)^{-s},\end{equation}
with the domain $s\in\mathbb C$ depending on subset $\mathcal P'$. For convergence, there must be a restriction on the maximum number of $1$'s allowed as parts of a partition $\lambda \in \mathcal P'$, or else some  summands of \eqref{pzetadef} may have infinitely many copies (adjoining $1$'s to a partition does not change the norm). 

Partition zeta functions enjoy many nice identities analogous to classical zeta function theorems \cite{Robert_zeta}. Let $\mathbb N^*$ denote the {\it natural numbers strictly greater than $1$}; thus $\mathcal P_{\mathbb N^*}$ (resp. $\mathcal C_{\mathbb N^*}$) denotes the set of  partitons (resp. {compositions}) having no part equal to $1$. As a generalization of $\zeta(s)$, if $\mathcal P' = \mathcal P_{T^*}$ for $T^*\subseteq \mathbb N^*,$ then $\zeta_{\mathcal P_{T^*}}(s)$ has an Euler product: 
\begin{equation}\label{Euler1} \zeta_{\mathcal P_{T^*}}(s)=\prod_{n\in T^*}(1-n^{-s})^{-1},\end{equation}
with domain $\operatorname{Re}(s)>1$. We note that $\zeta(s)$ is the case $T^*=\mathbb P$.
%
%
%
%
%
%
%
%
%

Along similar lines, for $\mathcal C'\subsetneq \mathcal C$ such that the number of 1's cannot exceed a fixed maximum number $\geq 0$ (or else the series diverges to infinity), let us define a {\it composition zeta function}:
\begin{equation}\label{czetadef} \zeta_{\mathcal C'}(s):=\sum_{c\in\mathcal C'} N(c)^{-s},\end{equation}
with the domain $s\in\mathbb C$ depending in this case on  $\mathcal C'$.   These also enjoy nice identities. 
For instance, the following identity gives a composition-theoretic analogue of  \eqref{Euler1}. 
\begin{proposition}\label{prop3} For $T^*\subseteq \mathbb N^*$, $s\in\mathbb C$ such that $|\sum_{n\in T^*}n^{-s}|<1$, we have 
\begin{equation*}\label{czeta2}\zeta_{\mathcal C_{T^*}}(s)= \frac{1}{1-\sum_{n\in T^*}n^{-s}}.\end{equation*}
\end{proposition}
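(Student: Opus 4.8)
The plan is to expand $\zeta_{\mathcal C_{T^*}}(s)$ directly by organizing the compositions according to their length, exactly as in the geometric-series reasoning behind Proposition \ref{prop1}. A composition $c\in\mathcal C_{T^*}$ of length $\ell(c)=r$ is precisely an ordered $r$-tuple $(c_1,\dots,c_r)$ with each $c_i\in T^*$, and since the norm is the product of the parts, the summand $N(c)^{-s}=\prod_{i=1}^{r}c_i^{-s}$ is multiplicative over the parts. This is the key structural observation: the contribution of $c$ depends on it only through a product of per-part weights $n\mapsto n^{-s}$, with the empty composition ($r=0$) contributing $N(\emptyset)^{-s}=1$.

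First I would group by length and write
\[
\zeta_{\mathcal C_{T^*}}(s)=\sum_{c\in\mathcal C_{T^*}}N(c)^{-s}=\sum_{r=0}^{\infty}\ \sum_{\substack{(c_1,\dots,c_r)\\ c_i\in T^*}}\prod_{i=1}^{r}c_i^{-s}.
\]
Since the inner sum over a fixed length $r$ factors into a product of $r$ identical sums, I would rewrite this as a geometric series in $Z:=\sum_{n\in T^*}n^{-s}$,
\[
\zeta_{\mathcal C_{T^*}}(s)=\sum_{r=0}^{\infty}\left(\sum_{n\in T^*}n^{-s}\right)^{r},
\]
which, under the hypothesis $|Z|<1$, converges to $1/(1-Z)$, the claimed identity.

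The main obstacle is justifying the two rearrangements, namely splitting the sum over the set $\mathcal C_{T^*}$ into a sum over lengths and factoring the fixed-length inner sum into a power of a single sum when $T^*$ is infinite. Both are legitimate once the iterated sum converges absolutely, and as in the proof of Proposition \ref{prop1} I would control this by the triangle inequality. Since $|N(c)^{-s}|=N(c)^{-\re(s)}=\prod_i c_i^{-\re(s)}$, the sum of absolute values is bounded by $\sum_{r\geq 0}\big(\sum_{n\in T^*}n^{-\re(s)}\big)^r$, which is finite precisely when $\sum_{n\in T^*}n^{-\re(s)}<1$. This is the natural absolute-convergence condition making $\zeta_{\mathcal C_{T^*}}(s)$ well defined as an (order-independent) sum over the set $\mathcal C_{T^*}$, and it implies $|Z|\le\sum_{n\in T^*}n^{-\re(s)}<1$; on this domain all the manipulations above are valid and the geometric series evaluates as stated. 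Finally I would remark that this is the multiplicative (norm) analogue of Proposition \ref{prop1}, with the per-part weight $zq^{n}$ there replaced by $n^{-s}$ here, so that the denominator $1-Z$ plays exactly the role of $\phi_S(z;q)$.
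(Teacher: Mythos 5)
Your proof is correct, and it reaches the identity by a route that is mechanically different from, though conceptually underneath, the paper's. The paper's proof is a one-line specialization of Lemma \ref{lemma1}: set $q=1$, $a_0=1$, $a_k=-k^{-s}$ for $k\in T^*$ (and $a_k=0$ otherwise), so that $g(1)=1-\sum_{n\in T^*}n^{-s}$, and then reorganize the resulting coefficient formula --- which groups compositions by \emph{size} $|c|=n$ --- into the sum $\sum_{c\in\mathcal C_{T^*}}N(c)^{-s}$. You instead inline the content of that lemma: you group compositions by \emph{length}, factor the fixed-length sum into the $r$-th power of $Z=\sum_{n\in T^*}n^{-s}$, and sum the geometric series. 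Since the proof of Lemma \ref{lemma1} cited in the paper is itself a geometric-series expansion of $1/g$, the two arguments rest on the same mechanism; what your version buys is self-containedness and a more explicit treatment of convergence --- you correctly observe that for the unordered sum over the set $\mathcal C_{T^*}$ to be unambiguous one really wants the absolute-convergence condition $\sum_{n\in T^*}n^{-\operatorname{Re}(s)}<1$, which is slightly stronger than the stated hypothesis $|Z|<1$ and which implies it. The paper papers over this same point with the phrases ``we proceed formally'' and ``noting $G(1)$ is absolutely convergent,'' so your remark is a refinement rather than a disagreement. What the paper's route buys in exchange is uniformity: the same Lemma \ref{lemma1} drives all the theorems in Sections \ref{section2} and \ref{section4}, so citing it keeps the $q$-series and Dirichlet-series results visibly parallel.
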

 
 \begin{proof}  We proceed formally, with convergence in $s$ dependent on choice of $T^*\subseteq \mathbb N^*$. In Lemma \ref{lemma1}, set $q=1 , a_0=1$, and  for $k\geq 1$, set $a_k= -k^{-s}$ if $k\in T^*$ and $a_k=0$ otherwise. Noting $G(1)$ is absolutely convergent, reorganize  $G(1)=\sum_{n\geq 0}b_n$ as a sum over compositions in  $\mathcal C_{T^*}$. 
  \end{proof}

%
%
%
%
%
%
For example, setting $T^*=\mathbb N^*$ in Proposition \ref{prop3}, then $\mathcal C_{T^*}=\mathcal C_{\mathbb N^*}$ (compositions with no $1$'s) and we arrive at a formula relating the composition zeta function $\zeta_{\mathcal C_{\mathbb N^*}}(s)$ to the classical case:
\begin{equation}\label{czeta2}\zeta_{\mathcal C_{\mathbb N^*}}(s)= \frac{1}{1-\left(\zeta(s)-1\right)}= \frac{1}{2-\zeta(s)}.\end{equation}

Let $h:\mathcal C \to \mathbb C$ be a composition-theoretic function. One can also define composition-theoretic Dirichlet series $\sum_{c\in \mathcal C'}h(c)N(c)^{-s}$ similar to the partition Dirichlet series in \cite{ORS, Robert_PhD, SS-zeta}, such as in the following identity resembling Proposition \ref{prop1}.

 \begin{proposition}\label{squares2} 
For $T^*\subseteq \mathbb N^*$,  $s,z \in\mathbb C$ such that $|\sum_{n\in T^*}n^{-s}|<|z^{-1}|$, we have 
\begin{equation*}
 \frac{1}{1-z\sum_{n\in T^*}n^{-s}}=\sum_{c\in\mathcal C_{T^*}}z^{\ell(c)}N(c)^{-s}. 
\end{equation*}
\end{proposition}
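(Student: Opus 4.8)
The plan is to reduce Proposition~\ref{squares2} to Lemma~\ref{lemma1} in exactly the manner used for Proposition~\ref{prop3}, but now folding the length-tracking variable $z$ into the coefficients. Concretely, in Lemma~\ref{lemma1} I would set $q = 1$ and $a_0 = 1$, and for $k \geq 1$ take $a_k = -z k^{-s}$ if $k \in T^*$ and $a_k = 0$ otherwise. Then $g(1) = 1 - z\sum_{n\in T^*} n^{-s}$ is precisely the denominator appearing on the left-hand side, so $G(1) = \bigl(1 - z\sum_{n\in T^*} n^{-s}\bigr)^{-1}$, which is the quantity we must match.

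Next I would read off the coefficients $b_n$ supplied by Lemma~\ref{lemma1}. Since $-a_i = z\,i^{-s}$ for $i \in T^*$ and $-a_i = 0$ otherwise, the product $\prod_i (-a_i)^{m_i}$ is nonzero only for compositions all of whose parts lie in $T^*$, i.e.\ for $c \in \mathcal{C}_{T^*}$, and for such $c$ it equals $\prod_{i\in T^*} (z\,i^{-s})^{m_i} = z^{\sum_i m_i}\prod_i (i^{-s})^{m_i} = z^{\ell(c)} N(c)^{-s}$, using $\sum_i m_i = \ell(c)$ and $N(c) = \prod_i i^{m_i}$. Hence $b_n = \sum_{c\in\mathcal{C}_{T^*},\,|c|=n} z^{\ell(c)} N(c)^{-s}$, and summing $G(1) = \sum_{n\geq 0} b_n$ gives the claimed identity. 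Equivalently, and perhaps more transparently, one can obtain the result by grouping the right-hand sum by length: a length-$m$ composition into parts from $T^*$ is just an ordered tuple $(c_1,\dots,c_m)\in (T^*)^m$ with $N(c)^{-s} = \prod_{i=1}^m c_i^{-s}$, so the inner sum is $\bigl(\sum_{n\in T^*} n^{-s}\bigr)^m$, and summing $\sum_{m\geq 0} z^m \bigl(\sum_{n\in T^*} n^{-s}\bigr)^m$ as a geometric series recovers the left-hand side.

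The hard part is not the algebra but the justification of convergence and of the rearrangement. The stated hypothesis $\bigl|\sum_{n\in T^*} n^{-s}\bigr| < |z^{-1}|$ is exactly the condition $|w| < 1$ for $w := z\sum_{n\in T^*} n^{-s}$ under which the geometric series $\frac{1}{1-w} = \sum_{m\geq 0} w^m$ converges, and it is the natural necessary condition in the spirit of the remark following Proposition~\ref{prop1}. What requires care is that passing from $\sum_{m} z^m\bigl(\sum_{n\in T^*} n^{-s}\bigr)^m$ to the single sum $\sum_{c\in\mathcal{C}_{T^*}} z^{\ell(c)} N(c)^{-s}$ (equivalently, summing the $b_n$ at the boundary point $q=1$) is a rearrangement of a multiple series, which I would justify by absolute convergence: for $\re(s)$ large enough that $|z|\sum_{n\in T^*} |n^{-s}| < 1$ the rearrangement is unconditional, and the identity then persists wherever both sides remain defined by analytic continuation in $s$. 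In keeping with the formal-power-series stance adopted for Proposition~\ref{prop3}, I would state the displayed convergence condition as the governing hypothesis and treat the reorganization into a sum over compositions as licensed by it.
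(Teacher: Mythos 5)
Your proof is correct and follows essentially the same route as the paper's: the paper likewise invokes Lemma~\ref{lemma1} with $q=1$, $a_0=1$, and $a_k=-zk^{-s}$ for $k\in T^*$ (zero otherwise), exactly as in the proof of Proposition~\ref{prop3}, and then reorganizes $G(1)=\sum_{n\geq 0}b_n$ as a sum over $\mathcal{C}_{T^*}$. Your added geometric-series derivation and the explicit attention to absolute convergence of the rearrangement are consistent with, and somewhat more careful than, the paper's terse two-line argument.
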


 \begin{proof} Proceed exactly as in the proof of Proposition \ref{prop3}, except set $a_k=-zk^{-s}$ if $k\in T^*$. 
  \end{proof}
 
Then we can prove identities analogous to those in Section \ref{main}, in multiplicative number theory. Recall the classical M\"{o}bius function $\mu(n)$ equals zero if $n\in \mathbb N$ is non-squarefree, and is equal to $(-1)^{\omega(n)}$ if $n$ is squarefree, where $\omega(n)$ is the number of different prime factors of  $n$.


\begin{theorem} \label{muthm}
The M\"{o}bius function $\mu(n)$ is given by the following sum over compositions with no part equal to 1, having norm equal to $n$:
 \[\mu(n) = {\sum_{\substack{c\in\mathcal{C}_{\mathbb N^*} \\ N(c) = n}}}
 (-1)^{\ell(c)}. \]
\end{theorem}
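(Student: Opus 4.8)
The plan is to derive the identity from Proposition \ref{squares2} by specializing the free parameter $z$, and then to match Dirichlet coefficients against the classical Dirichlet series for the M\"obius function. First I would recall the classical fact that $1/\zeta(s) = \sum_{n\geq 1}\mu(n)n^{-s}$ on $\operatorname{Re}(s)>1$, and rewrite the Riemann zeta function by peeling off its $n=1$ term as $\zeta(s) = 1 + \sum_{n\in\mathbb N^*}n^{-s}$. This exhibits $1/\zeta(s)$ in precisely the shape of the left-hand side of Proposition \ref{squares2}, in the same spirit as the reorganization already recorded in \eqref{czeta2}.

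Next I would invoke Proposition \ref{squares2} with $T^* = \mathbb N^*$ and $z = -1$. Its hypothesis $|\sum_{n\in\mathbb N^*}n^{-s}| < |z^{-1}| = 1$ holds whenever $\operatorname{Re}(s)$ is large enough that $\zeta(\operatorname{Re}(s)) < 2$, since the triangle inequality bounds the left side by $\zeta(\operatorname{Re}(s))-1$. On this half-plane the proposition yields
\[ \frac{1}{\zeta(s)} = \frac{1}{1+\sum_{n\in\mathbb N^*}n^{-s}} = \sum_{c\in\mathcal C_{\mathbb N^*}}(-1)^{\ell(c)}N(c)^{-s}. \]

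Then I would regroup the right-hand sum according to the value of the norm. For each fixed $n\geq 1$ the set $\{c\in\mathcal C_{\mathbb N^*} : N(c)=n\}$ is finite: every part is at least $2$, so $\ell(c)\leq \log_2 n$ and $c$ ranges over the finitely many ordered factorizations of $n$ into factors exceeding $1$ (with $n=1$ contributing only the empty composition, giving $(-1)^0 = 1 = \mu(1)$). Absolute convergence on the half-plane above justifies
\[ \sum_{c\in\mathcal C_{\mathbb N^*}}(-1)^{\ell(c)}N(c)^{-s} = \sum_{n\geq 1}\left(\sum_{\substack{c\in\mathcal C_{\mathbb N^*}\\ N(c)=n}}(-1)^{\ell(c)}\right)n^{-s}. \]

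Finally I would compare the two Dirichlet series $\sum_{n\geq 1}\mu(n)n^{-s}$ and the regrouped series above: they agree on a nonempty half-plane, so by uniqueness of Dirichlet series coefficients their $n$-th coefficients coincide, which is exactly the asserted formula. The main obstacle is this coefficient-comparison step rather than the algebra: one must verify that the regrouping by norm is legitimate (finiteness of each fiber together with absolute convergence) and then appeal to uniqueness of Dirichlet expansions. This replaces the term-by-term power-series matching used in the earlier $q$-series proofs, because the multiplicative bookkeeping of $N(c)^{-s}$ stands in for the additive bookkeeping of $q^{|c|}$; once that substitution is made, the argument runs parallel to the proof of Theorem~\ref{eventhm}.
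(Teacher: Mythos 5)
Your proposal is correct and follows essentially the same route as the paper: specialize Proposition \ref{squares2} to $T^*=\mathbb N^*$, $z=-1$ to get $1/\zeta(s)=\sum_{c\in\mathcal C_{\mathbb N^*}}(-1)^{\ell(c)}N(c)^{-s}$, regroup by norm, and compare Dirichlet coefficients. Your treatment of convergence is in fact slightly more careful than the paper's (which asserts the identity for $\operatorname{Re}(s)>1$, whereas the hypothesis of Proposition \ref{squares2} with $z=-1$ really demands $\zeta(\operatorname{Re}(s))<2$), but this is a refinement of the same argument, not a different one.
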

%
%

 \begin{proof}  Set $T^*=\mathbb N^*$ and $z=-1$ in Proposition \ref{squares2} to yield, for $\operatorname{Re}(s)>1$, 
 \begin{equation*}
  \frac{1}{\zeta(s)}\  =\  \sum_{c\in\mathcal C_{\mathbb N^*}}(-1)^{\ell(c)}N(c)^{-s}, \end{equation*}
 which can be rewritten as an equality between Dirichlet series:
 \begin{equation*}
\sum_{n\geq 1}\mu(n)n^{-s}\  =\   \sum_{n\geq 1}\left(\sum_{\substack{c\in\mathcal{C}_{\mathbb N^*}\\ N(c)=n }}\ (-1)^{\ell(c)}\right)n^{-s}. \end{equation*}
Comparing coefficients of $n^{-s}$ gives the theorem. 
   \end{proof}

%
%
%
%
%
%
%
%
%
%
%
%
\begin{remark} The theorem above is noteworthy because it contains a sum over compositions of fixed {norm},  instead of the usual size invariant. \end{remark}

 \section{Closing remarks and questions}
 We have proved a number of identities that show integer compositions to be a natural setting to express various kinds of analytic and combinatorial identities. While compositions are distinct  from partitions as combinatorial objects, due to their respecting the additional constraint of ordering, they turn out to have comparable features like generating functions, Dirichlet-like series, etc.; and we can use compositions to give new formulas for partition statistics, as well as for the classical M\"{o}bius function. 
On the other hand, compositions are less well-suited  to many techniques used in partition theory, such as relating Young diagrams to $q$-Pochhammer symbols $(z;q)_{\infty}$; and generating functions like Propositions \ref{prop1} and \ref{prop3} do not look as amenable to formal manipulation as $q$-Pochhammer-based partition generating functions, or Euler products. 


Are there further connections between composition generating functions and modular forms? We prove here that these functions can be related to Ramanujan's theta functions, which are essentially modular forms; further study in this direction seems warranted. Reciprocals of theta functions are connected to compositions into $k$-gonal parts; are there deeper connections between the symmetry groups of  regular $k$-gons, and the symmetries of modular forms? Are there interesting results related to a composition analogue of the $q$-bracket of Bloch--Okounkov? 

Do composition-theoretic Dirichlet series fit as nicely into composition theory as partition Dirichlet series do in partition theory \cite{ORS, Robert_zeta, SS-zeta}? On that note, does a nice {\it non-commutative} multiplicative theory of compositions exist as a composition analogue of multiplication in the integers, as we suggested in the previous section, along the lines of \cite{Robert_arithmetic}? Using the dictionary between partitions and arithmetic in \cite{supernorm},   the previous question also suggests a non-commutative theory of prime factorization in the integers. What other partition-theoretic and number-theoretic objects and structures admit  composition-theoretic analogues?
 \section*{Acknowledgments}
The authors are grateful to George Andrews, Maurice  Hendon, Mike Hirschhorn, Matthew Just,  Jeremy Lovejoy, Ken Ono, C\'{e}cile Piret and James Sellers for comments that benefited our work. In particular, we thank C. Piret for advice on proving convergence in Prop. \ref{prop1}.  

\end{document}